\title{Knot Colorings: Coloring and Goeritz matrices}
\author{Sudipta Kolay }
\address{School of Mathematics \\ Georgia Institute of Technology\\Atlanta, GA 30332, USA}
\email{skolay3@math.gatech.edu}
\date{}
\theoremstyle{plain}
\newtheorem{thm}{Theorem}
\newtheorem{prop}[thm]{Proposition}
\newtheorem{claim}[thm]{Claim}
\newtheorem{ex}[thm]{Exercise}
\theoremstyle{definition}
\theoremstyle{remark}
\newtheorem{example}[thm]{Example}
\begin{document}

\begin{abstract}
    Knot colorings are one of the simplest ways to distinguish knots, dating back to Reidemeister, and popularized by Fox. In this mostly expository article, we discuss knot invariants
    like colorability, knot determinant and number of colorings, and how these can be computed from either the coloring matrix or the Goeritz matrix. We give an elementary approach
    to this equivalence, without using any algebraic topology. We also compute knot determinant, nullity of pretzel knots with arbitrarily many twist regions. 
\end{abstract}

\maketitle

\section{Introduction}

The purpose of this note is to give an exposition of knot colorings through coloring and Goeritz matrices, and to discuss of $n$-colorings of pretzel knots. Although the coloring and Goeritz matrices can be combinatorially defined given a diagram of the knot, the classical proofs of this equality of their determinants, and related results in the literature \cite[Chapter 9]{L} require some background on the part of reader. We give an elementary proof of these results without using any algebraic topology. Our approach here is somewhat similar to that of \cite{T} (building on \cite{C}), although more elementary. While most of this article is expository, the results (and our methods of computation) in the last section for the determinant and number of colorings of pretzel knots with arbitrarily many twist regions are new. It is hoped that this article would be helpful to people at all levels interested in learning about knots and knot colorings.

A knot is a smooth embedding of a circle $S^1$ in three-space $\mathbb{R}^3$. Here are two examples:

\begin{figure}[!ht]
    \centering
    \includegraphics[width=8 cm]{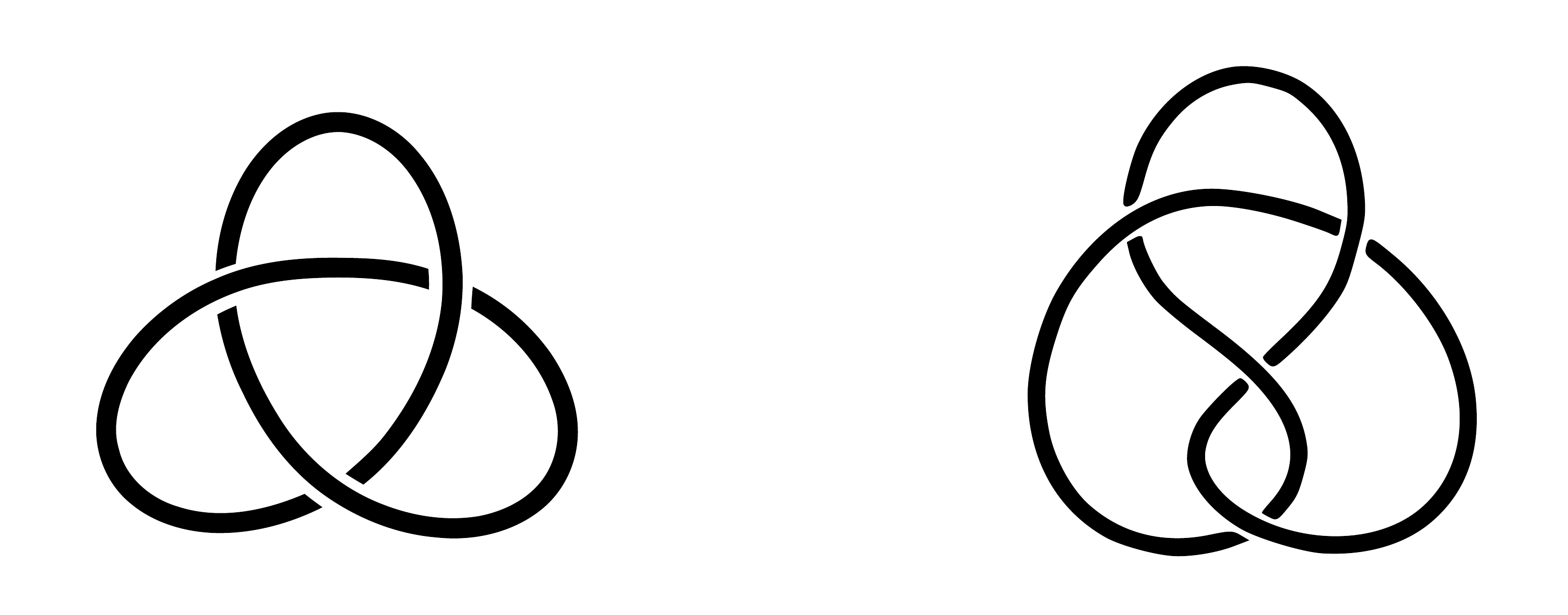}
    \caption{The trefoil and the figure eight knots}
    \label{knotegs}
\end{figure}

 The fundamental question in knot theory is given two knots, can we tell them apart? One of the simplest ways of telling knots apart is \emph{tricolorability}, whether we can non-trivially (i.e. using at least two colors) color the knot with three colors so that at any crossing the three strands coming together has either all the same colors, or three different colors. This is the simplest invariant which tells the above two knots, the trefoil and the figure eight knot, are different (not isotopic, meaning one cannot be deformed into another).
 
 The notion of tricolorability goes back at least to Reidemeister, at the beginning of the nineteenth century. Ralph Fox generalized this notion to $n$-colorability, and popularized this way of studying knots. The property of being $n$-colorable (and related invariants, like total number of $n$-colorings) is a fairly powerful knot invariant, which can be developed with minimal amount of mathematical machinery, namely linear algebra.

There is a related integer valued knot invariant called the determinant of a knot whose divisors $n$ are exactly those for which the given knot is $n$-colorable. It is well known that this invariant determinant is the absolute value of the determinant of any coloring matrix of the knot and it is also the determinant of any Goeritz matrix for the knot. We will give an elementary proof of this result in Sections 6 and 7, after reviewing necessary background materials up to Section 5. We will use ideas from Section 6 to set up an equivalent linear system for pretzel knots in the final section.

\noindent \textit{Acknowledgements}. The author would thanks John Etnyre for making helpful comments on earlier drafts of this paper. This work is partially supported by NSF grants DMS-1608684 and DMS-1906414.

\section{Knots and links}

A link is a smooth embedding (or in other words an injective map which is differentiable at each point) of disjoint union of circles in $\mathbb{R}^3$ (so a knot is just a link with one component). Given any such embedding, we can orthogonally project everything with respect to the $z$-axis, and we get a projection of the knot in the $xy$-plane, which we will call the link diagram. Generically\footnote{It may happen that some knot diagram has triple (or higher order) crossings, however we can isotope some of the strands so that the only crossings that remains are double points.}, we may assume that the only non-embedded points in the diagram are isolated double points. The advantage of knot diagrams is that it is much easier to draw on a piece of paper and manipulate it. In fact, all the examples of knots and links given here (or any other paper based format) is a diagram. While it is clear that given any link in $\mathbb{R}^3$ we have a diagram for it, one may wonder if we can say when two diagrams represent the same knot. The answer is yes, as proven by Alexander-Briggs \cite{AB} and independently Reidemeister \cite{R}.

Any two knot diagrams of the same link, up to planar isotopy, can be related by a sequence of the following three moves (called the Reidemeister moves). Each move operates on a small region of the diagram and is one of three types (see Figure ~\ref{RM}):
\begin{enumerate}
    \item Twist (or untiwst) a strand.
    \item Isotope a strand over (or under) another strand.
    \item Isotope a strand completely over (or under) a crossing.
\end{enumerate}
\begin{figure}[!ht]
    \centering
    \includegraphics[width=14 cm]{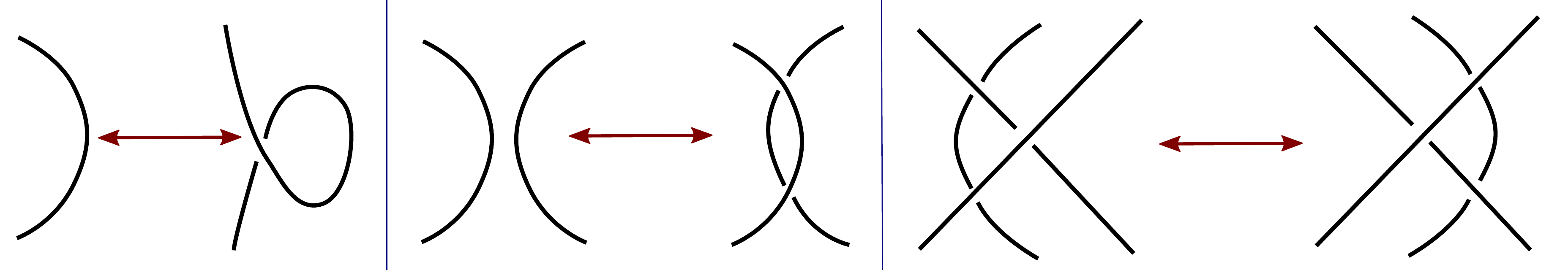}
    \caption{Reidemeister moves}
    \label{RM}
\end{figure}
The above result shows it is enough to understand links diagrams to understand links. 
Moreover, if we can define an object from a diagram, and show it remains unchanged under any of the Reidemeister moves, it follows that it is in fact an invariant of the link (and not the diagram).

\section{Coloring of knots and links}
Now we are ready to introduce $n$-colorings of links. Given any knot diagram $D$, an $n$-\textbf{coloring} of a link is is an assignment to each strand
an element of $\mathbb{Z}/n\mathbb{Z}$, such that whenever we have a crossing with the overstrand associated to $y$, and the understrands associated to $x$ or $z$ as illustrated by Figure \ref{kcolor}, we have $x+z=2y$ in $\mathbb{Z}/n\mathbb{Z}$.
\begin{figure}[!ht]
    \centering
    \includegraphics[width=6 cm]{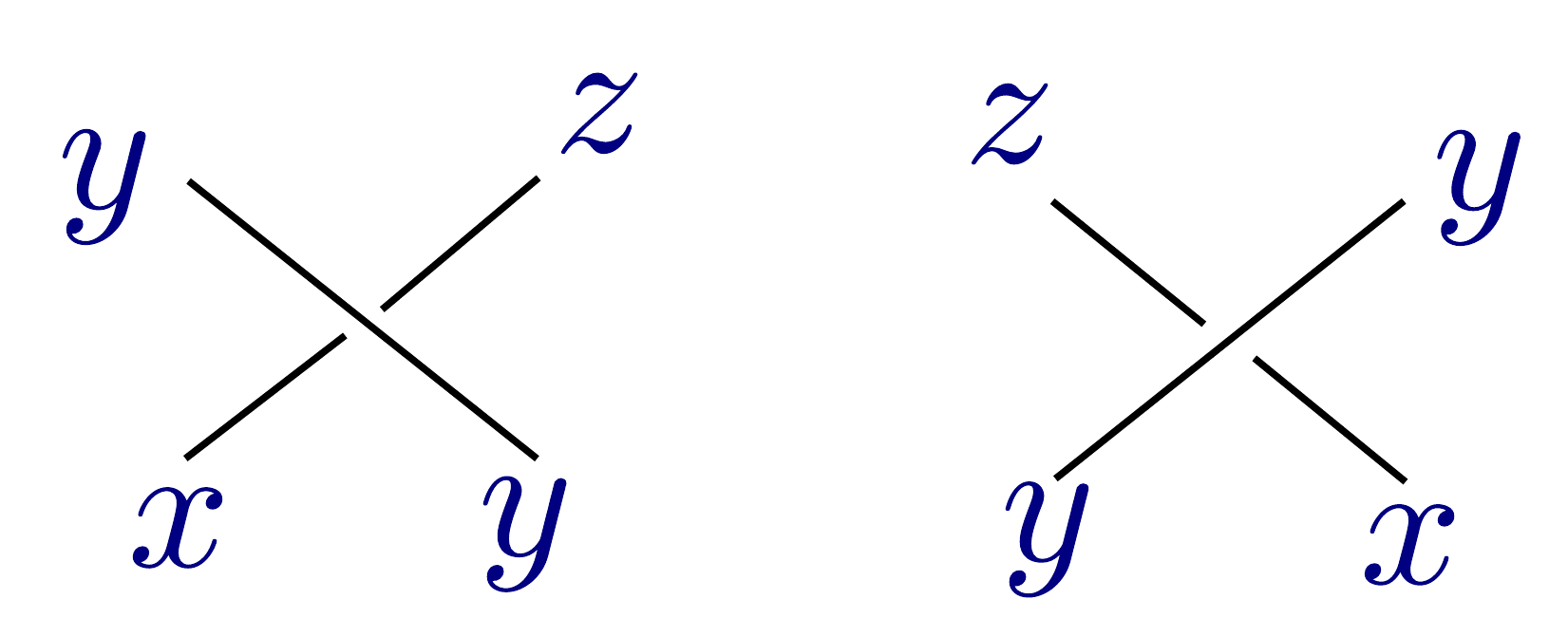}
    \caption{Coloring equation: $2y=x+z$.}
    \label{kcolor}
\end{figure}

\begin{ex}
Show that given any knot diagram with such a coloring, if a Reidemeister move is applied the new diagram has a unique coloring which is the same outside the small region where the move was applied. Conclude that an $n$-coloring is in fact an invariant of the link (and not just the diagram).
\end{ex}
If $n$ is small it is customary to color the strands with $n$ distinct colors, and this is where the name colorability comes from. For example when $n=3$, we use three colors customarily red, blue and green; and $3$-colorability is also known as tricolorability. 
\begin{figure}[!ht]
    \centering
    \includegraphics[width=9 cm]{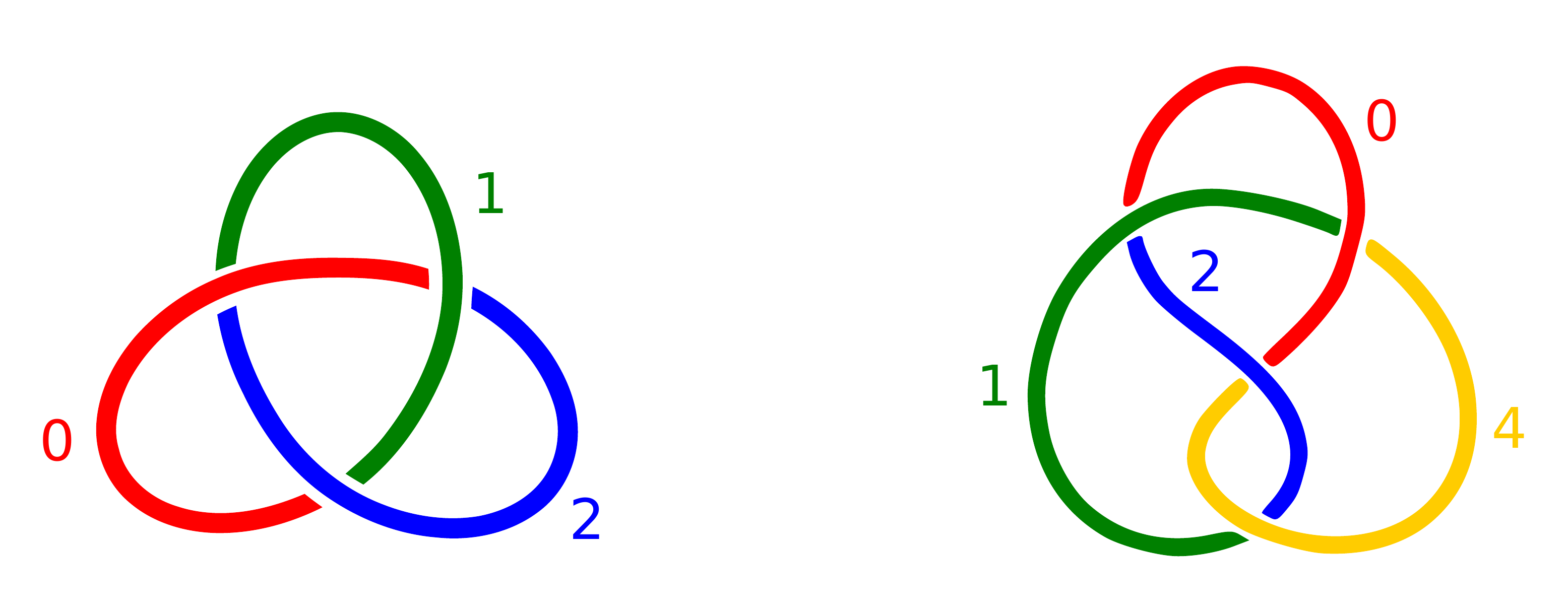}
    \caption{Examples of a 3-coloring and a 5-coloring}
    \label{CE}
\end{figure}
When $n$ is large, it is not practical to use different colors, and we simply use the labelling (perhaps this invariant could also have been called $n$-labelling) on each of the strands.
Note that if we color (or label) each strand by constant element $s\in\mathbb{Z}/n\mathbb{Z}$, then all the above constraints are trivially satisfied. Such a coloring is called a \emph{trivial} coloring, and any link has $n$ such trivial $n$-colorings. Hence we will say a link is $n$-colorable if it has a non-trivial $n$-coloring (because otherwise every link has a coloring and this notion would not be able to distinguish between links). We will see in the next section that the set of all $n$-colorings (including the trivial ones) has additional structure, and this is also an invariant (see Exercise \ref{ex2}). Had we removed the trivial colorings from this collection, the remaining set does not canonically get such a structure.

For the rest of this article, we will restrict to the case of knots, because some of the statements that follow would get very technical otherwise. The technical issues come from the fact that one has more trivial colorings for split links (i.e. links which can be split apart to lie inside disjoint three balls in $\mathbb{R}^3$), and to get appropriate count of number of different colorings we need to divide out by the number of trivial colorings. All of the statements hold for non-split links (i.e. links which not split links). If we had a split link, we could separate them into different regions and work out coloring invariants for each of the separate non-split links. This would suggest that there is no loss of generality to restricting to the case of non-split links, except for the caveat that given a link diagram, it is not easy to determine if it is a diagram of a split link or not; or if it is split, how many components it has.

\section{Coloring matrix}
Notice that the constraints we had at each crossing for $n$-colorings is a linear equation, which suggests that we can consider all such equations together and use tools from linear algebra to gain a better understanding of what is going on.

Given a knot diagram $D$ with $c$ crossings, we have exactly $c$ strands
(with the only exception being the standard diagram of the unknot, which has one strand and no crossings). For each strand let us assign a variable $x_i$, and for each crossing we can write down an equation $x_i+x_k=2 x_j$, which has to be satisfied if the $x_i$'s gave rise to a valid coloring.
If we consider the set of linear equations, and write out the matrix form, we get 
$$\tilde{C}\vec x=\vec 0,$$ where $\tilde{C}$ is a $c\times c$ matrix, and $\vec x$ is a column vector with $i$-th entry $x_i$. We define $\tilde{C}$ to be the \emph{pre-coloring matrix} for the diagram $D$.

\begin{figure}[!ht]
    \centering
    \includegraphics[width=12 cm]{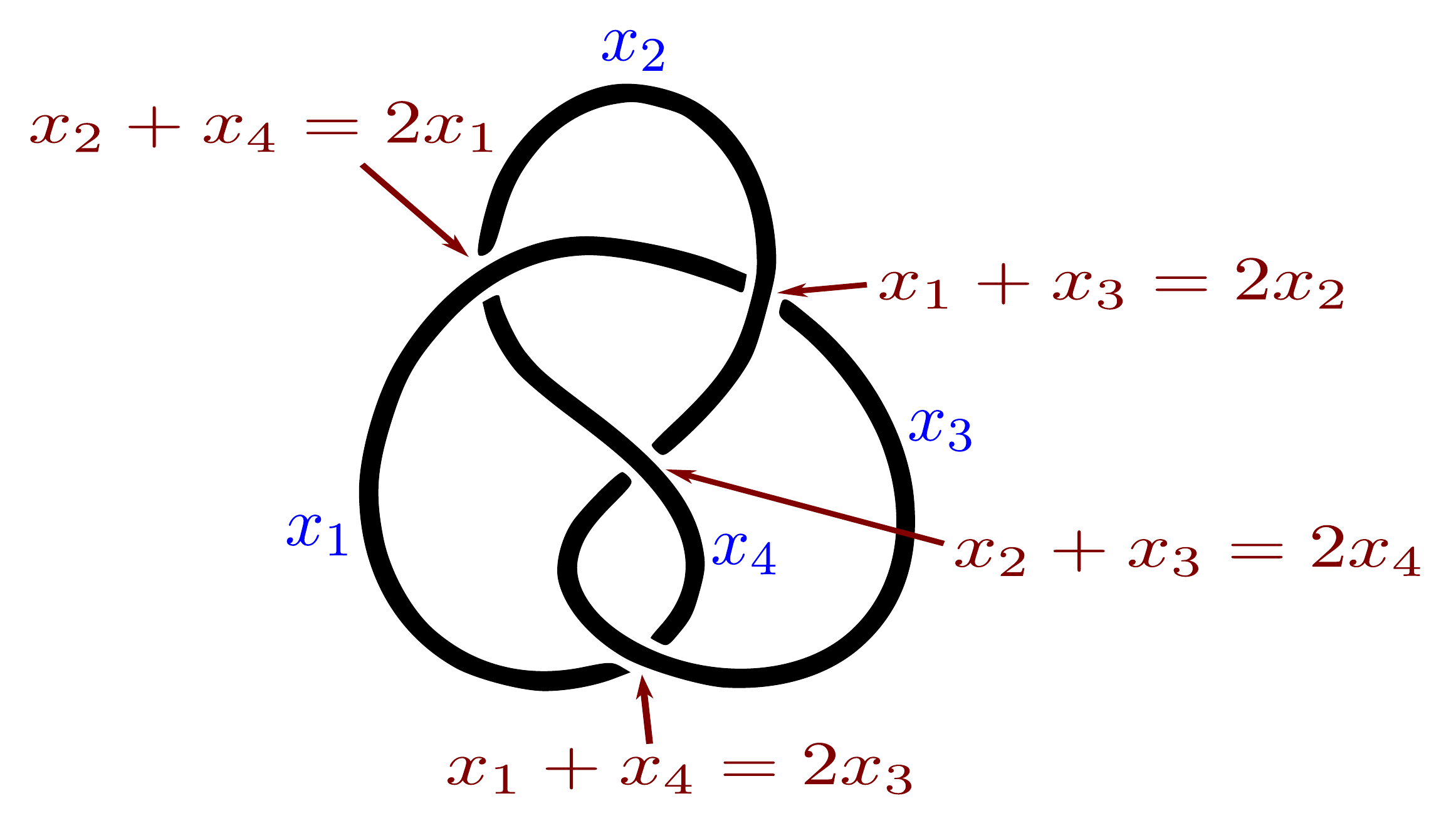}
    \caption{Coloring equations for the figure eight knot.}
    \label{fig8}
\end{figure}

\begin{example}\label{eg1} In this example we work out the pre-coloring matrix of the figure eight knot, see Figure \ref{fig8}. The system of equations we obtain are:
\[ \spalignsys{x_1-2x_2+x_3=0;-2x_1+x_2+x_4=0;x_1-2x_3+x_4=0;x_2+x_3-2x_4=0}\]
Hence the pre-coloring  matrix $\tilde{C}$ equals $$\spalignmat{1 -2 1 0; -2 1 0 1; 1 0 -2 1; 0 1 1 -2}.$$
Notice that we made some choices, we could have labelled the strands in a different order, or permuted the equations in the linear system, and we would have obtained a different pre-coloring matrix.
\end{example}

We observe that the set of all possible $n$-colorings is nothing but the solution space of the above equation reduced modulo $n$, i.e. the Null Space of the pre-coloring matrix, and as such gets the structure of a vector subspace.
\begin{ex}\label{ex1}
Go back to your solution of Exercise \ref{ex1} and check that the linear structure of the set of all $n$-colorings is preserved under each of the Reidemeister moves. 
Conclude that the solution space is an invariant of the knot.
\end{ex}

We note that the sum of the columns of these matrices are $\vec 0$, and this is in fact true more generally (possibly with a signed sum, depending on if we write
the equation as $x_i+x_k-2 x_j=0$ or $-x_i-x_k+2 x_j=0$).  In other words the constant vector $\vec s$ (with all entries $s$) is a solution to the system $\tilde{C}\vec x=\vec 0,$
and we will call such solutions \emph{semitrivial}. Hence if $\vec x$ is any solution, $\vec x+ \vec s$ is again a solution for any scalar $s$. We can fix this redundancy,
by requiring one of the $x_i$'s is zero.  Since the pre-coloring matrix always has a non-trivial solution, it is singular, we know that there is some linear dependence among the rows
as well. So let us delete some row and some column from the pre-coloring matrix, and we will call this resulting square matrix $C$ \emph{a} \textbf{coloring matrix} for the diagram $D$.
While the matrix $C$ is not an invariant (even for the knot diagram $D$), it turns out the absolute value of the determinant of $C$ is an invariant of the knot (not just a diagram!), which we will call \emph{the} \textbf{determinant} of the knot.\\

We digress to some linear algebra to see why various cofactors of pre-coloring matrices are the same.
Recall, for any square matrix $A$, the $(i,j)$ cofactor $C_{i,j}$ is $(-1)^{ij}$ times the determinant of the matrix $A_{i,j}$ obtained by removing the $i$-th row and $j$-th column from A. \\

\begin{prop}\label{prop1}
Suppose $A$ is an $n\times n$ matrix with real entries  (or more generally with entries in some commutative ring) so that sum of all the columns is the column vector of all zeros, and sum of all the rows is the row vector of all zeros (equivalently, the sum of entries of each row and each column add up to zero). Then all the $(i,j)$ cofactors of $A$ are the same.
\end{prop}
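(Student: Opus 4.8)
The plan is to exploit the two hypotheses (row sums and column sums all vanish) to relate any two cofactors by elementary row and column operations that do not change determinants. First I would set up notation: write $R_1,\dots,R_n$ for the rows and $K_1,\dots,K_n$ for the columns of $A$, so the hypotheses say $\sum_i R_i = \vec 0$ (as a row vector) and $\sum_j K_j = \vec 0$ (as a column vector). The key observation is that because the columns sum to zero, the $k$-th column is the negative of the sum of all the other columns; a symmetric statement holds for rows. This is exactly the kind of linear dependence that lets one replace a deleted row/column by a combination of the surviving ones without altering a determinant.

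The main steps I would carry out are as follows. First, I would show that for fixed $i$, all the cofactors $C_{i,j}$ (as $j$ varies) are equal: starting from the minor $A_{i,j_0}$ obtained by deleting row $i$ and column $j_0$, I would add to some surviving column the sum of all the other surviving columns; by the column-sum hypothesis this replaces that column with (minus) the entries of the deleted column $j_0$ restricted to the surviving rows, after which a column swap and sign bookkeeping exhibits $A_{i,j_0}$ and $A_{i,j_1}$ as having determinants that agree up to the sign $(-1)^{j_0+j_1}$ — which is precisely cancelled by the $(-1)^{ij}$ normalization in the definition of the cofactor. So $C_{i,j}$ is independent of $j$. Second, by the completely symmetric argument using the row-sum hypothesis, $C_{i,j}$ is independent of $i$. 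Combining the two gives that all $C_{i,j}$ are equal.

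Alternatively — and this may be cleaner to write — I would prove the single statement $C_{i,j} = C_{i+1,j}$ (and $C_{i,j} = C_{i,j+1}$) directly by a one-step operation, and then induct; or I would package everything into the observation that for the $(i,j)$ minor, adding all other surviving rows to one surviving row turns that row into the negative of the deleted $i$-th row's surviving entries, and likewise for columns, so that $A_{i,j}$ and $A_{i',j'}$ are related by row/column operations plus permutations whose total sign is $(-1)^{i+i'+j+j'}$. I would be careful to phrase the argument so it is valid over an arbitrary commutative ring: everything used (expansion by cofactors, invariance of the determinant under adding one row/column to another, sign change under a transposition) holds over any commutative ring, and in particular no division is needed.

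The main obstacle I anticipate is purely bookkeeping: tracking the signs correctly when a deleted row or column is not adjacent to the one being compared, and making sure the permutation sign that arises from re-inserting/swapping lines matches the $(-1)^{i+j}$ in the cofactor normalization so that the signs genuinely cancel rather than compound. I would handle this by first proving the adjacent case ($C_{i,j} = C_{i,j+1}$ and $C_{i,j} = C_{i+1,j}$), where the sign is transparent, and then obtaining the general case by composing adjacent steps, so that no delicate non-adjacent permutation-sign computation is ever needed.
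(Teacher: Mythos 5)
Your proposal is correct and follows essentially the same route as the paper's own outline (Exercise~\ref{ex2}): use the vanishing row and column sums to rewrite a deleted line as minus the sum of the surviving ones, relate $C_{1,1}$ to $C_{1,2}$, then to $C_{1,j}$, then to $C_{i,j}$, with the permutation sign cancelling against the $(-1)^{i+j}$ in the cofactor normalization. Your plan to reduce to adjacent indices to keep the sign bookkeeping transparent is a sound way to carry out that outline.
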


\begin{ex}\label{ex2}

Work out the following outline to prove the above proposition:
\begin{enumerate}
    \item For any such matrix $A$, show that you can reconstruct $A$ if you know the submatrix $A_{i,j}$, and the indices $i,j$.
    \item Using $\vec R_1+...+\vec R_n=\vec 0$, and properties of the determinant, relate $C_{1,1}$ and $C_{1,2}$. 
    \item Relate $C_{1,1}$ with $C_{1,j}$.
    \item Relate $C_{1,1}$ with $C_{i,j}$.
\end{enumerate} 
\end{ex}

In order to use this result from linear algebra, we need to know that
for an arbitrary knot diagram we can choose a pre-coloring matrix in the specified form, just as we saw in the example.
\begin{ex} \label{ex3}
Given any knot diagram $D$, show that one can number the crossings and strands in such a way so that hypothesis of Proposition \ref{prop1} holds.
\end{ex}

Another approach is to find a modified version of Proposition \ref{prop1} which can be applied to any pre-coloring matrix.
\begin{ex} \label{ex4}
Suppose $A$ is an $n\times n$ matrix with real entries so that 
a linear combination (with each weight being $\pm 1$) of every column is the column vector of all zeros, a linear combination (with each weight being $\pm 1$) of every row is the row vector of all zeros. Then all the absolute values of cofactors (or equivalently, minors) of $A$ are the same.
\end{ex}

The aforementioned results from linear algebra tell us that the determinant is indeed an invariant of the diagram, but we still need to justify that it does not depend on the diagram. One way (which is commonly used in the literature) to go about this is to show that the knot determinant is invariant under the Reidemeister moves. We will take a slightly different approach here, by invoking the fact (as seen in the last section) that $n$-colorability 
is a knot invariant.

\begin{prop}\label{prop2}
The following are equivalent:
\begin{enumerate}[label=(\roman*)]
    \item A pre-coloring matrix $\tilde{C}$ has a non-semitrivial solution in $\mathbb{Z}/n\mathbb{Z}$.
    \item A coloring matrix ${C}$ has a non-trivial solution in $\mathbb{Z}/n\mathbb{Z}$.
    \item $\det C=0$ in $\mathbb{Z}/n\mathbb{Z}$.
    \item $n$ divides $\det C$.
\end{enumerate}
\end{prop}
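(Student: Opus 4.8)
The plan is to prove the four statements equivalent through the implications $(iii)\Leftrightarrow(iv)$, $(i)\Leftrightarrow(ii)$, and $(ii)\Leftrightarrow(iii)$. The first of these needs no argument: ``$\det C=0$ in $\mathbb{Z}/n\mathbb{Z}$'' is simply a restatement of ``$n$ divides the integer $\det C$''. So the substance splits into a bookkeeping step comparing the pre-coloring matrix $\tilde C$ with the coloring matrix $C$, and a linear-algebra step about a square system over the ring $\mathbb{Z}/n\mathbb{Z}$.

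For $(i)\Leftrightarrow(ii)$ I would use two facts already at hand: the semitrivial solutions of $\tilde C\vec x=\vec 0$ are exactly the constant vectors, i.e.\ the multiples of the all-ones vector, and --- after numbering the crossings and strands as in Exercise~\ref{ex3} so that Proposition~\ref{prop1} applies --- the rows of $\tilde C$ sum to the zero row, so that the equation coming from the deleted crossing is a consequence of the others. First I would take a solution $\vec x\in(\mathbb{Z}/n\mathbb{Z})^{c-1}$ of $C\vec x=\vec 0$ and pad it with a $0$ in the slot of the deleted strand, obtaining $\hat x\in(\mathbb{Z}/n\mathbb{Z})^{c}$; the rows of $\tilde C$ other than the deleted one restrict precisely to the rows of $C$ and the inserted zero contributes nothing, so $\hat x$ satisfies each of those equations, and hence the deleted equation too, being minus their sum. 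Conversely, from any solution $\hat x$ of $\tilde C\vec x=\vec 0$ I would subtract the semitrivial solution all of whose entries equal the deleted coordinate of $\hat x$, and then delete that now-vanishing coordinate to land in the solution space of $C$. These two constructions are mutually inverse, and a solution of $\tilde C$ is semitrivial exactly when the associated solution of $C$ vanishes; hence $\tilde C$ has a non-semitrivial solution over $\mathbb{Z}/n\mathbb{Z}$ if and only if $C$ has a non-zero one.

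For $(ii)\Leftrightarrow(iii)$ I would pass to the Smith normal form $C=UDV$ of the integer matrix $C$, with $U,V$ invertible over $\mathbb{Z}$ and $D=\mathrm{diag}(d_1,\dots,d_{c-1})$, $d_1\mid d_2\mid\cdots\mid d_{c-1}$. Since $U$ and $V$ stay invertible modulo $n$, they carry the solution space of $C\vec x=\vec 0$ over $\mathbb{Z}/n\mathbb{Z}$ isomorphically onto that of $D\vec x=\vec 0$, which is the direct sum of the $d_i$-torsion subgroups of $\mathbb{Z}/n\mathbb{Z}$ and is non-trivial precisely when $n$ is not coprime to $d_1\cdots d_{c-1}=\pm\det C$; from this the equivalence $(ii)\Leftrightarrow(iii)$ follows, using also that $\det C$ is a nonzero integer --- equivalently, that the pre-coloring matrix of a knot has nullity exactly one over $\mathbb{Q}$ --- which is where the restriction to knots (and the invariance of the coloring space from Section~3) comes in. I expect this last equivalence to be the main obstacle: because $\mathbb{Z}/n\mathbb{Z}$ is not a domain for composite $n$, the familiar equivalence over a field between having a non-trivial kernel and having vanishing determinant is not available, and it has to be recovered by the divisor bookkeeping above.
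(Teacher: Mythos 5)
Your handling of $(iii)\Leftrightarrow(iv)$ and $(i)\Leftrightarrow(ii)$ matches the paper's proof, and in one respect improves on it: you explain why padding a solution of $C$ with a zero in the deleted slot also satisfies the deleted equation (it is minus the sum of the remaining rows), a point the paper glosses over when it says one can ``go backwards by adding $0$ to the $j$-th spot.'' That part is fine.

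The gap is in $(ii)\Leftrightarrow(iii)$. Your Smith normal form computation is correct as far as it goes: with $C=UDV$ and $D=\mathrm{diag}(d_1,\dots,d_{c-1})$, the solution space of $C\vec x=\vec 0$ over $\mathbb{Z}/n\mathbb{Z}$ is isomorphic to $\bigoplus_i \mathbb{Z}/\gcd(d_i,n)\mathbb{Z}$, and this is non-trivial exactly when $\gcd(n,\det C)>1$. But the closing sentence ``from this the equivalence $(ii)\Leftrightarrow(iii)$ follows'' is a non sequitur: $\gcd(n,\det C)>1$ is strictly weaker than $n\mid\det C$. The two conditions coincide when $n$ is prime, so your argument closes in that case; for composite $n$ they do not, and in fact the equivalence $(ii)\Leftrightarrow(iii)$ fails as stated. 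Concretely, the trefoil has $\det C=\pm 3$, and rescaling its $3$-coloring $(0,1,2)$ by $2$ gives a non-trivial $6$-coloring $(0,2,4)$, so $(ii)$ holds for $n=6$ while $6\nmid 3$. Your analysis thus actually proves the correct general statement --- $(ii)$ is equivalent to $\gcd(n,\det C)>1$ --- and exposes that the proposition needs $n$ prime (or needs $(iii)$ and $(iv)$ rephrased in terms of $\gcd$). The paper avoids confronting this by calling $(ii)\Leftrightarrow(iii)$ ``a standard fact in linear algebra,'' but that fact is standard only over a field, i.e.\ for $n$ prime; so the defect is in the source as much as in your write-up. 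To repair your proof, either restrict to prime $n$ and say so, or replace $(iii)$--$(iv)$ by the $\gcd$ condition and carry that change through to the later argument that the knot determinant is a knot invariant.
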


\begin{proof}
 Suppose $C$ is obtained from $\tilde{C}$ by deleting the $i$-th row and $j$-th column.\\
$(i)\Longleftrightarrow(ii)$: Given a solution $\vec x$ of the pre-coloring matrix $\tilde{C}$, we can add a constant vector $\vec s$ to get another solution $\vec y$ of $\tilde{C}$ with $y_j=0$ (note that this operation preserves if the solution vector was semitrivial or not). The column vector obtained by deleting $y_j$ from $\vec y$ is a solution to the coloring matrix $C$, and moreover one can go backwards by adding $0$ to the $j$-th spot. This process describes a bijection between all solutions of $\tilde{C}$ with $j$-th entry zero, and all solutions of $C$, and further the non-semitrivial solutions of $\tilde{C}$ correspond exactly to the non-trivial solutions of $C$.\\
$(ii)\Longleftrightarrow(iii)$ is a standard fact in linear algebra; and $(iii)\Longleftrightarrow(iv)$ follows from the definition of the quotient set $\mathbb{Z}/n\mathbb{Z}$.
\end{proof}

Suppose $D_1$ and $D_2$ are two diagrams for a knot $K$, with coloring matrices $C_1$ and $C_2$. Then for every $n$ dividing $\det(D_1)$, we see by Proposition 6 that the diagram $D_1$ is $n$-colorable, and hence (since $n$-colorability 
is a knot invariant) the diagram $D_2$ is $n$-colorable as well, and again by Proposition 6 we have $n$ divides $\det(D_2)$. So we see that $\det(D_1)$ divides $\det(D_2)$, and by  symmetry $\det(D_2)$ divides $\det(D_1)$. Thus the absolute values of $\det(D_1)$ and $\det(D_2)$ are the same, so knot determinant is  invariant of knot.

The determinant of knot $K$ determines for which $n$, the knot $K$ is $n$-colorable. However the reader may have noticed that in order to find the knot determinant directly from definition, we need to compute the determinant of a matrix of size $(c-1)\times (c-1)$, which in some cases may not be computationally efficient. In the next sections, we will talk about an alternate, easier approach to compute the knot determinant and related invariants.

\section{Goeritz matrix} Given a knot diagram we can checkerboard color the regions, for example see
Figure ~\ref{GM}. Suppose we assign to each crossing $c$ the sign $\eta(c)\in\{\pm 1 \}$ according to Figure ~\ref{SC}: 
\begin{figure}[!ht]
    \centering
    \includegraphics[width=7 cm]{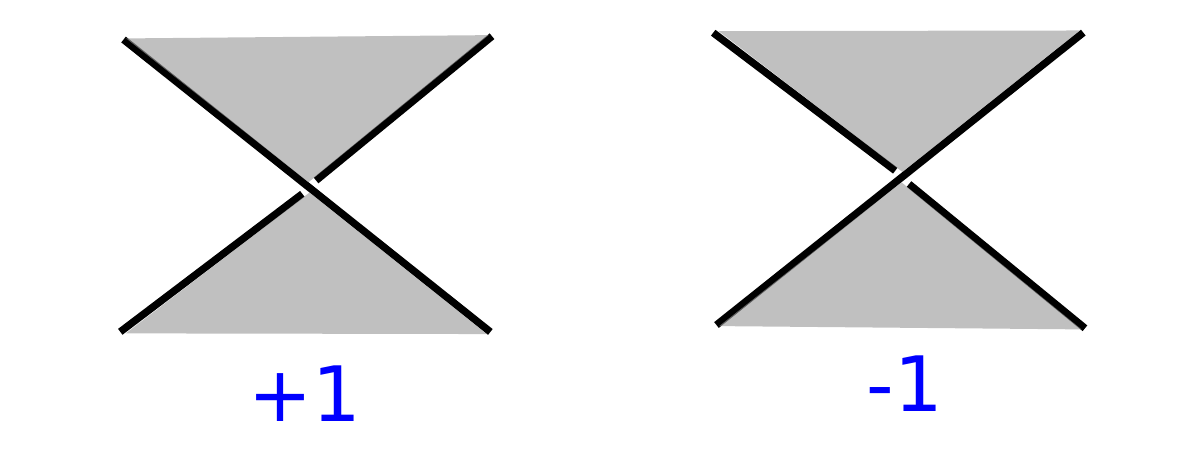}
    \caption{Signs at crossings. Note that these signs are independent of an orientation on the knot, but will flip if we switch the shaded and unshaded regions in the checkerboard coloring.}
    \label{SC}
\end{figure}

Suppose the shaded regions are enumerated $R_1,...,R_k$.
Let us define the $k\times k$ pre-Goeritz matrix $\tilde{G}$ for the diagram $D$ by the following:
For the off diagonal entries ($i\neq j$), we set  $$\tilde{G}_{i,j}:= \sum \eta (c),$$ where the sum ranges over all crossings $c$ where regions $R_i$ and $R_j$ come together. Let us now define the diagonal entries by 
$$\tilde{G}_{i,i}:=-\sum_{j\neq i}\tilde{G}_{i,j}.$$ We now note that $\tilde{G}$ is a square symmetric matrix whose columns (respectively rows) sum to zero.

\begin{figure}[!ht]
    \centering
    \includegraphics[width=10 cm]{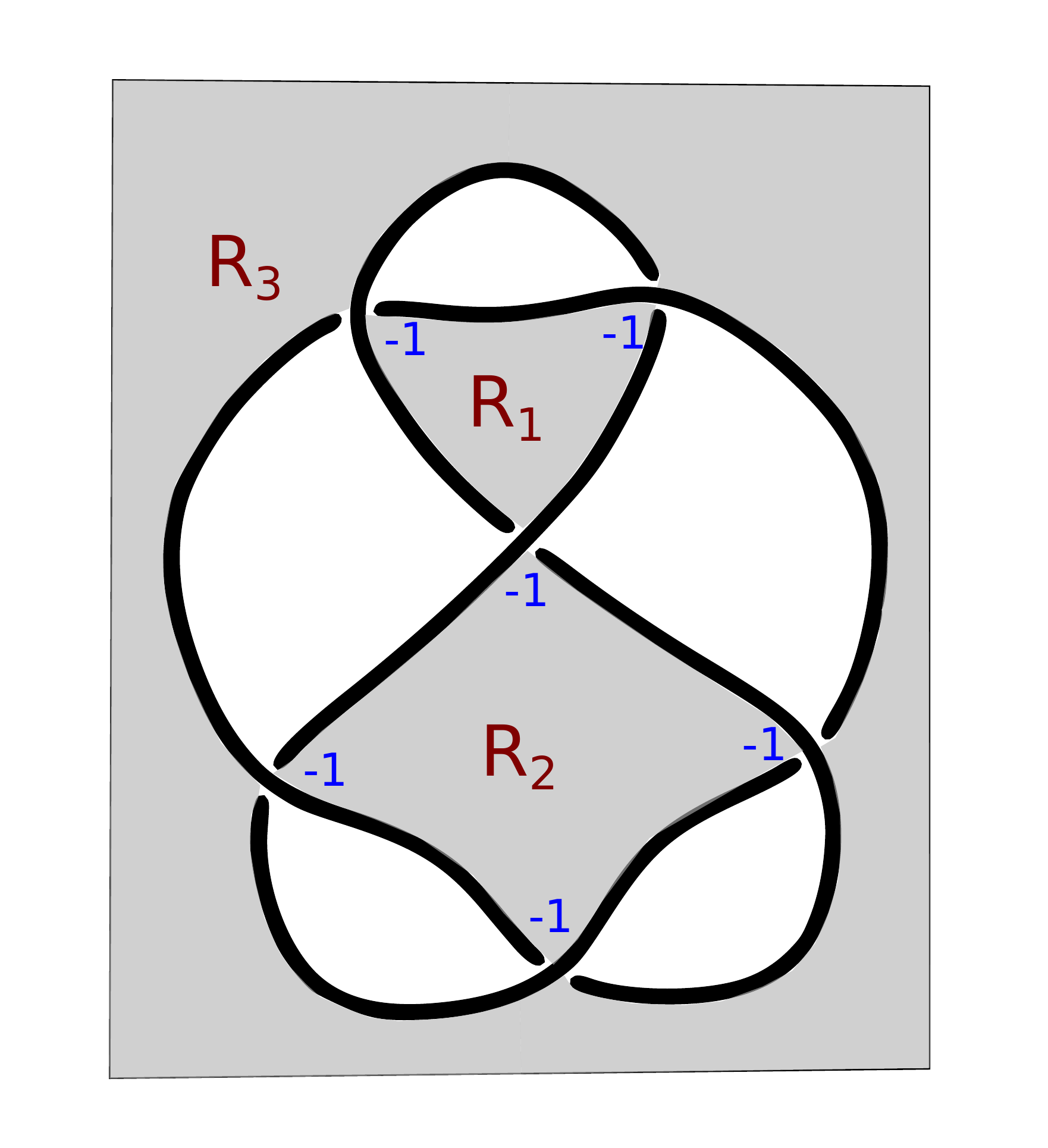}
    \caption{A knot diagram with a checkerboard coloring.}
    \label{GM}
\end{figure}

\begin{example}\label{eg2}
The pre-Goeritz matrix for the knot diagram in Figure \ref{GM} with the given checkerboard coloring is:
$$\spalignmat{3 -1 -2;-1 4 -3;-2 -3 5}$$

\end{example}

Just as we saw earlier for the pre-coloring matrices, the determinant of $\tilde{G}$ is zero, however if we form a Goeritz matrix by deleting any row and any column, the absolute value of determinant the resulting matrix is well defined for the diagram, which we will call \emph{the} \textbf{Goeritz determinant} of the diagram. We get a similar result regarding the Goeritz matrices as we had for the coloring matrices, and the same proof works.

\begin{prop}\label{prop3} Let us pick any  pre-Goeritz matrix $\tilde{G}$ for a knot diagram $D$, and obtain a Goeritz matrix $G$ by deleting some row and some column.
The following are equivalent:
\begin{enumerate}[label=(\roman*)]
    \item $\tilde{G}$ has a non-semitrivial solution in $\mathbb{Z}/n\mathbb{Z}$.
    \item ${G}$ has a non-trivial solution in $\mathbb{Z}/n\mathbb{Z}$.
    \item $\det G=0$ in $\mathbb{Z}/n\mathbb{Z}$.
    \item $n$ divides $\det G$.
\end{enumerate}
\end{prop}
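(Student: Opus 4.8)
The plan is to run, essentially verbatim, the argument used for Proposition~\ref{prop2}. The only features of the pre-coloring matrix that were exploited there are that it is a square matrix all of whose rows sum to zero and all of whose columns sum to zero, and these are precisely the properties recorded for $\tilde{G}$ at the moment it was defined (the diagonal entries were rigged so that every row sums to zero, and since $\tilde{G}$ is symmetric the columns sum to zero as well). In particular the constant vectors $\vec{s}$ lie in the kernel of $\tilde{G}$, so a ``semitrivial'' solution of $\tilde{G}$ again means a constant vector, and the bijection machinery of Proposition~\ref{prop2} should transfer without change.

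Concretely, let $G$ be obtained from $\tilde{G}$ by deleting row $i$ and column $j$. For $(i)\Longleftrightarrow(ii)$ I would argue as follows. Given a solution $\vec{x}$ of $\tilde{G}\vec{x}=\vec{0}$ over $\mathbb{Z}/n\mathbb{Z}$, add the constant vector all of whose entries equal $-x_j$ (this stays a solution, because each row of $\tilde{G}$ sums to zero) to obtain a solution $\vec{y}$ with $y_j=0$, and then delete the $j$-th coordinate to get $\vec{w}\in(\mathbb{Z}/n\mathbb{Z})^{k-1}$; the equations of $\tilde{G}\vec{y}=\vec{0}$ indexed by the rows $i'\neq i$ say precisely that $G\vec{w}=\vec{0}$. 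Conversely, from a solution $\vec{w}$ of $G$ I would build $\vec{y}$ by inserting $0$ in position $j$: the equations indexed by rows $i'\neq i$ hold by construction, while the equation indexed by row $i$ holds because the columns of $\tilde{G}$ sum to zero, so row $i$ of $\tilde{G}$ is the negative of the sum of the remaining rows. These two constructions are mutually inverse, so they set up a bijection between the solutions of $\tilde{G}$ whose $j$-th coordinate vanishes and the solutions of $G$; since the zero vector is the only constant vector with a vanishing coordinate, the bijection carries non-semitrivial solutions of $\tilde{G}$ to non-trivial solutions of $G$ and back. This proves $(i)\Longleftrightarrow(ii)$.

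The remaining equivalences are formal and identical to those in Proposition~\ref{prop2}: $(ii)\Longleftrightarrow(iii)$ is the assertion that a square matrix over $\mathbb{Z}/n\mathbb{Z}$ has a non-trivial kernel exactly when its determinant vanishes over $\mathbb{Z}/n\mathbb{Z}$, and $(iii)\Longleftrightarrow(iv)$ is the definition of reduction modulo $n$. Running this chain for all $n$ shows, as a byproduct, that the residue of $\det G$ modulo $n$ --- hence $|\det G|$ --- does not depend on which row and column were deleted; alternatively, that independence is immediate from Proposition~\ref{prop1} applied to $\tilde{G}$, whose rows and columns sum to zero.

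The one step I would want to nail down carefully --- and the only one that is not a literal transcription of computations already performed --- is $(ii)\Longleftrightarrow(iii)$ for composite $n$, where $\mathbb{Z}/n\mathbb{Z}$ is not a field and ``$\det G$ is not invertible'' is a priori weaker than ``$\det G=0$''; the clean way to make this precise is to bring the integer matrix $G$ to Smith normal form, which expresses both the $\mathbb{Z}/n\mathbb{Z}$-kernel and the value of $\det G$ in terms of the elementary divisors and thereby reduces the equivalence to an elementary divisibility check. Apart from that, the proof is a line-by-line copy of the proof of Proposition~\ref{prop2}.
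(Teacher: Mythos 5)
Your proposal takes exactly the paper's route: the paper's entire proof of this proposition is the remark that ``the same proof works'' as for Proposition~\ref{prop2}, and your transcription of that argument --- using that the rows and the columns of $\tilde{G}$ each sum to zero to set up the bijection between solutions of $\tilde{G}$ with vanishing $j$-th coordinate and solutions of $G$ --- is correct and, if anything, more careful than the original about why the equations indexed by rows $i'\neq i$ give $G\vec{w}=\vec{0}$ and why the deleted row $i$ comes for free from the column relation. So for $(i)\Longleftrightarrow(ii)$, and for $(iii)\Longleftrightarrow(iv)$, there is nothing to add.

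The one substantive point concerns the step you yourself flagged. Your instinct that $(ii)\Longleftrightarrow(iii)$ is delicate for composite $n$ is right, but the Smith normal form computation does not close the gap --- it shows the equivalence actually fails. Writing $G=UDV$ with $U,V$ unimodular and $D=\mathrm{diag}(d_1,\dots,d_k)$, the kernel of $G$ over $\mathbb{Z}/n\mathbb{Z}$ is nontrivial iff $\gcd(d_i,n)>1$ for some $i$, i.e.\ iff $\gcd(\det G,n)>1$, whereas $(iii)$ and $(iv)$ ask for $n\mid\det G$; these conditions differ when $n$ is composite. A concrete instance: for the standard trefoil diagram one gets a $1\times 1$ Goeritz matrix $G=(\pm 3)$, and with $n=9$ the congruence $3x\equiv 0 \pmod 9$ has the nontrivial solution $x=3$ even though $9\nmid 3$ (correspondingly, the trefoil has a nontrivial $9$-coloring, e.g.\ $0,3,6$). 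So $(ii)\Longleftrightarrow(iii)$ holds only for $n$ prime, where $\mathbb{Z}/n\mathbb{Z}$ is a field. This defect is inherited from the paper itself --- the same issue afflicts Proposition~\ref{prop2}, where the step is dismissed as ``a standard fact in linear algebra'' --- so it is not something you introduced; but your proposed repair should either restrict the statement to prime $n$ or replace conditions $(iii)$ and $(iv)$ by ``$\gcd(n,\det G)>1$.''
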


As the reader can probably expect, the Goeritz determinant does not depend on the knot diagram. In fact,  we will show it is exactly the same as the determinant
of the knot (coming from coloring matrix)! This  result also means that determinant of a Goeritz matrix obtained from the shaded regions is same in absolute value to the
one coming from the unshaded regions. While it is not too difficult to see that the Goeritz determinant is invariant under the Reidemeister moves,
it is not quite clear from this perspective if the determinants corresponding to the unshaded and shaded regions are related, or if they are related to the determinant
of a coloring matrix. In the next two sections, we will describe another approach where we will create a bijection between the solution spaces of the pre-coloring and pre-Goeritz matrices, and this will give us the above result about determinants, and a bit more.

\section{Difference} 
Let us note that if we have a colored knot diagram with a collection of half twists as illustrated in Figure \ref{df}, 
\begin{figure}[!ht]
    \centering
    \includegraphics[width=12 cm]{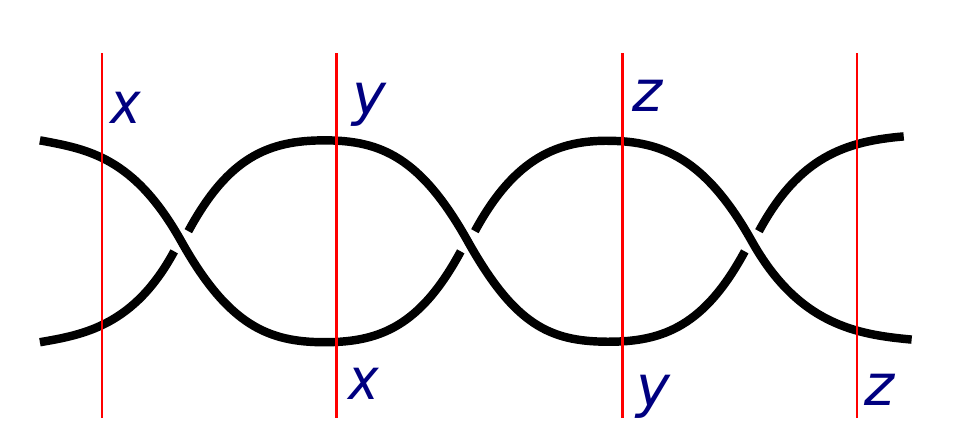}
    \caption{Difference}
    \label{df}
\end{figure}
the difference between the colors on top and the bottom is the same for every generic vertical slice (i.e. does not pass through a crossing or {touch}\footnote{By two curves touching at a point we will mean there is a point of intersection where the tangent lines agree.} the knot diagram), since the coloring equation is equivalent to $x_i-x_j= x_j-x_k$. In other words, if we drew any generic arc between the regions $R_1$ and $R_2$ in the part of the diagram illustrated above, and took the difference of the colors, then we would get the same value. One may wonder if something similar works for any two crossing-adjacent shaded (or unshaded) regions. Indeed, this will be the case more generally (that is for non adjacent regions, even with different checkerboard colors), once we formulate the right generalization for this difference.

Given a knot diagram $D$ with regions $R_1,...,R_r$ (ignoring checkerboard coloring for now), of a knot $K$ with an $n$-coloring, let us define the \textbf{difference} $d(R_i,R_j)\in\mathbb{Z}/n\mathbb{Z}$ between any two regions
by taking any generic (i.e. not passing through a crossing, and intersecting $D$ transversely) oriented simple 
arc  between points in the interior of regions $R_i$ and $R_j$, and computing the alternate (beginning with positive) sum of the colors on the strand the 
arc crosses.

\begin{claim}\label{wd} The difference $d(R_i,R_j)$ is well defined, that is it does not depend on the arc we chose to compute the alternating sum.
\end{claim}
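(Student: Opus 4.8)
The plan is to show that any two generic arcs from a point in $R_i$ to a point in $R_j$ give the same alternating sum, by reducing to the case where the two arcs differ by an elementary local modification, and then checking that the alternating sum is unchanged under each such modification. First I would fix a basepoint $p \in \mathrm{int}(R_i)$ and $q \in \mathrm{int}(R_j)$ and observe that any two generic oriented arcs $\alpha, \beta$ from $p$ to $q$ are homotopic rel endpoints in the plane (since $\mathbb{R}^2$ is simply connected), and moreover the homotopy can be taken through generic arcs except at finitely many ``times'' where exactly one of the following elementary events occurs: (a) the arc is pushed across a strand and back (creating/removing two transverse intersections with the same strand that are adjacent along the arc), (b) the arc is pushed across a crossing of $D$ (exchanging, near the crossing, an over-strand--over-strand passage for an under--under passage, or rerouting past the crossing), and (c) the arc is slid so that the order in which it meets two strands along a region boundary is swapped without changing which strands are met. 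This is the standard ``any two paths differ by finitely many finger moves and triangle moves across the 1-complex $D$'' fact, which I would state carefully but not prove in detail.

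Next I would check invariance of the alternating sum under each elementary event. For (a): if the arc acquires two consecutive crossings with the same strand, that strand has a single color $x$, and since these are consecutive the two new terms enter the alternating sum with opposite signs, contributing $+x - x = 0$; so the sum is unchanged. For (b): this is exactly the computation motivated just before the claim. Near a crossing the over-strand carries color $y$ and the two under-arcs carry $x$ and $z$ with $x + z = 2y$; rerouting the arc past the crossing replaces a subword of the sequence of colors-crossed by another subword, and one checks in each of the finitely many local pictures (depending on the direction the arc enters and exits the small disk around the crossing, and which strands it meets) that the alternating sum of the replaced segment equals the alternating sum of the replacing segment, using precisely the relation $x+z=2y$. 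For instance, replacing ``$\ldots, x, \ldots$'' (arc passing under once) by ``$\ldots, y, z, \ldots$'' or ``$\ldots, y, x, y, z, \ldots$'' as appropriate: the key identity is that $x$ and $z - y + \ldots$ reconcile via $2y = x+z$; I would lay out the small number of cases in a short enumerated list or a single figure. For (c): swapping the order of two meetings along a region boundary either swaps two adjacent terms of the same sign pattern in a way that cancels, or is subsumed by cases (a) and (b); I would note which.

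The main obstacle I expect is organizing case (b) cleanly: writing down a finite, manifestly exhaustive list of local arc behaviors near a crossing and verifying the alternating-sum identity for each, without the bookkeeping becoming opaque. A clean way to cut down the casework is to first normalize: show (using case (a) moves, which are free) that we may assume the homotopy never creates a ``bigon'' of the arc with a strand, so that at a type-(b) event the arc meets the small disk around the crossing in a single sub-arc crossing at most two strands; this reduces (b) to essentially one or two pictures. An alternative, possibly slicker route that avoids enumerating homotopy moves altogether: define, for each region $R$, a candidate value $f(R) \in \mathbb{Z}/n\mathbb{Z}$ by picking one fixed arc from $R_i$ to $R$, and then verify directly that $f(R) - f(R') $ equals the one-step alternating contribution for every pair of regions $R, R'$ adjacent across a single strand-segment — this is a purely local check at each edge of the planar graph $D$, and the global well-definedness of $d$ then follows because the plane is simply connected (the ``sum around any loop in the dual graph is zero'' reduces to summing around the loops encircling a single crossing or a single strand-segment, each of which vanishes by the coloring equation and by the $+x-x$ cancellation respectively). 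I would likely present this second approach as the primary argument and relegate the homotopy-move version to a remark, since it localizes all the content to the two identities already isolated above.
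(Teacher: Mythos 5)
Your argument is correct and is essentially the paper's second proof of Claim \ref{wd}: reduce to elementary moves relating two generic arcs (your (a) is the birth/death move, your (b) is passing over a crossing) and check that the alternating sum is preserved by the cancellation $+x-x=0$ and by the coloring relation $x+z=2y$ respectively; your ``slicker'' reformulation via local consistency and simple connectivity of the plane repackages the same two local identities. The paper also gives a first, genuinely different proof -- viewing a closed loop as an unknotted split component and invoking the link-invariance of $n$-colorability -- which you did not pursue, but that is not needed for correctness.
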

We sketch two proofs of this well definedness below, after we discuss an example.

\begin{example}\label{egdiff}
 Let us work out the differences in 5-coloring of the figure eight knot we saw earlier, see Figure \ref{fig8diff}.
 Notice that for the arc $\gamma_1$ between regions $R_1$ and $R_6$, there are three intersection points, and the alternating sum is
 $2-4+1=-1=4$ in $\mathbb{Z}/5\mathbb{Z}$, where as there is only one intersection for the arc $\gamma_2$ and here the alternating sum in 4.
 Let us assume Claim \ref{wd} for now and find the various values of the differences $d(R_1,R_j)$ for different $j$.
 $$d(R_1,R_2)=2 ,\quad d(R_1,R_3)=3 ,\quad d(R_1,R_4)=0,\quad d(R_1,R_5)=4,\quad d(R_1,R_6)=4.$$
\end{example}

\begin{ex} \label{exdiff}
 Compute the other differences $d(R_i,R_j)$ for the knot coloring in Figure \ref{fig8diff}.
\end{ex}

\begin{figure}[!ht]
    \centering
    \includegraphics[width=7 cm]{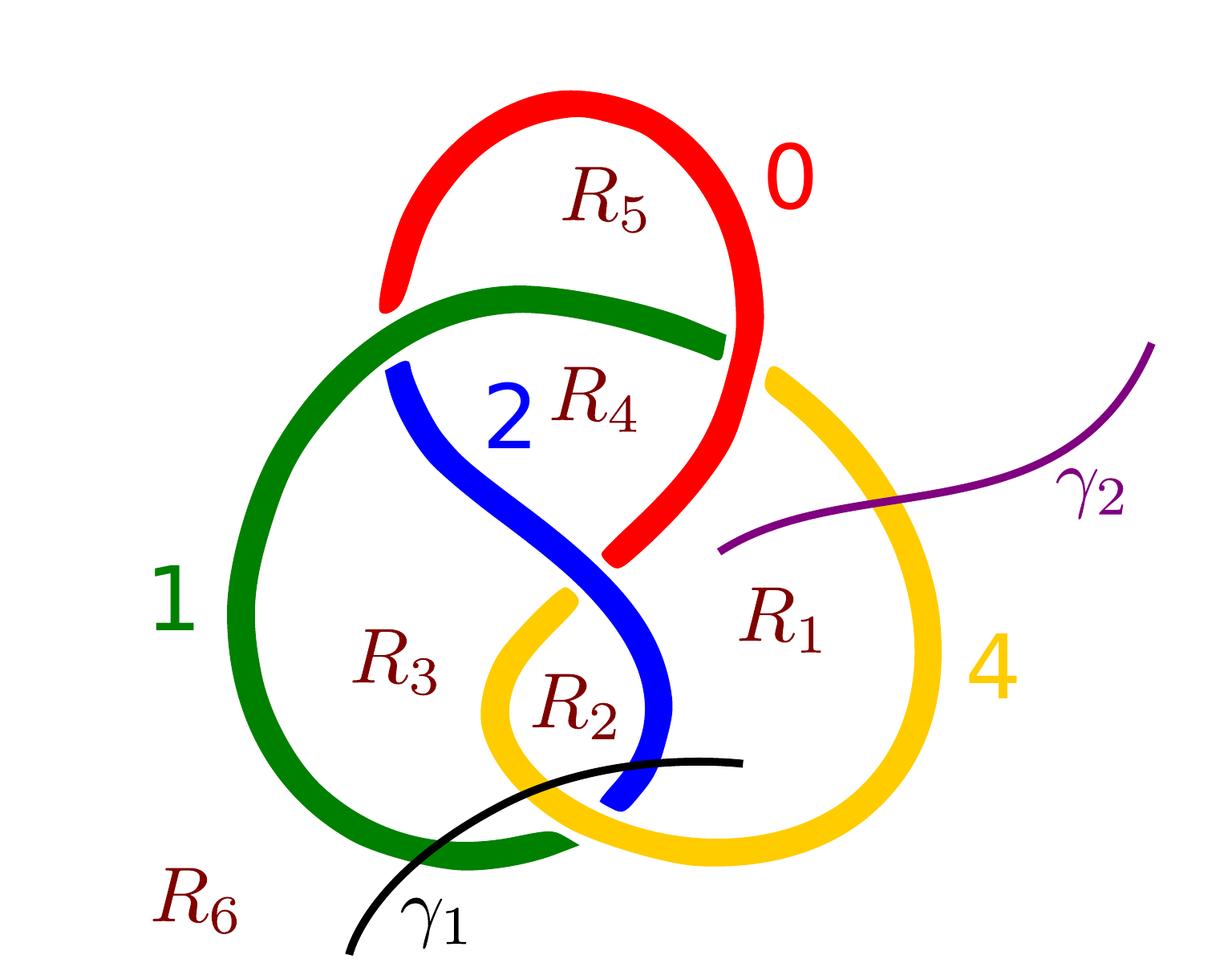}
    \caption{Differences in a coloring of the figure eight knot.}
    \label{fig8diff}
\end{figure}

\begin{proof}[ First proof of Claim \ref{wd}]

Note that in order to show this alternating sum is same for any two arcs with same endpoints, it is equivalent to show the alternating sum along any generic closed loop in the plane is zero
(concatenate one arc and the other's reverse). Let us begin by considering a generic simple closed loop $\gamma$ (i.e. no self intersections, it can intersect $D$).  We can think of $D$ together with the loop $\gamma$ as being a
diagram of a link of two components, one of them being the original knot $K$, and the other knot lying entirely below $K$. The latter is easily seen to be the unknot (as the diagram has no crossings), unlinked from $K$, by the way we chose crossings.
Hence we can isotope the unlinked unknot away from the original knot diagram, we can color it arbitrarily by some color. Since $n$-colorability is an invariant of the link, we know that the link diagram we started off with also has an $n$-coloring.
Suppose the color of the strand of this new unknot in some region $R_k$ is $c$, if we use the coloring equations check that the color we end up with once we traverse along $\gamma$ is the sum of $c$ and the alternating sum of the colors the
$\gamma$ crosses, and so this alternating sum has to be zero. Now if $\gamma$ was not simple, if $\gamma_1$ is a subloop of $\gamma$ which is simple then by our above discussion we see that the alternating sum along $\gamma_1$ is zero. Hence, we can
we can start deleting innermost loops from $\gamma$ without changing the alternating sum, and hence we get the alternating sum along any generic closed loop is zero. Note that we can vary the endpoints of the arcs in the same region by concatenating with an arc that does not cross $D$, and hence the alternating sum remains the same. Hence, it follows that $d(R_i,R_j)$ is well defined.
\end{proof}

 \begin{proof}[Second proof of Claim \ref{wd}]
An alternate approach to check well definedness would be to use the fact that any two such arcs are related by the following two moves:
going over a crossing, and encountering a birth/death (think about the non-generic instances and then the various ways we can perturb them to get to a generic arc), as illustrated in Figure \ref{moves}.
\begin{figure}[!ht]
    \centering
    \includegraphics[width=14 cm]{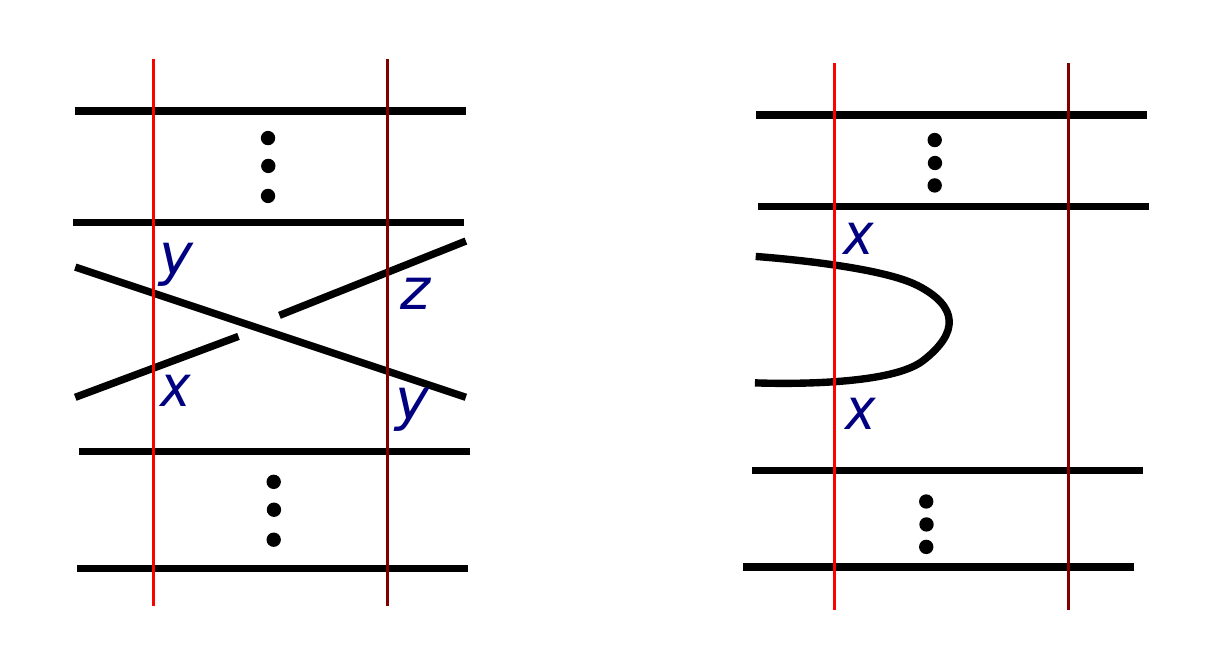}
    \caption{Moves relating a pair of isotopic arcs, when the knot diagram remains fixed.}
    \label{moves}
\end{figure}
We leave it to the reader to verify that the alternating sum remains the same under these moves.
\end{proof}

Let us now consider some checkerboard coloring of the knot diagram. For any triple $i,j,k$;  we can concatenate an arc from $R_i$ to $R_j$; and an arc from $R_j$ to $R_k$ to obtain an arc from $R_i$ to $R_j$. It follows that for any triple $i,j,k$, the differences satisfy 
 $$d(R_i,R_j)+d(R_j,R_k)=d(R_i,R_k) \text{ if } R_i \text{ and } R_j \text{ have same checkerboard coloring, and}$$ 
 $$d(R_i,R_j)-d(R_j,R_k)=d(R_i,R_k) \text{ if } R_i \text{ and } R_j \text{ have different checkerboard coloring.}$$ 
Conversely, if we are given such a collection $\{d(R_i,R_j)\}$ for all pairs of regions satisfying the above relations, 
then we obtain an $n$-coloring on the knot $K$: pick any two regions adjacent along a strand of the knot, and the difference between the two regions tells us the coloring on the strand. In other words, the data of an $n$-coloring is equivalent to the data of the differences.

 Since there are relations among the various $d(R_i.R_j)'s$, there is some redundancy;  we note that having $d(R_i.R_j)'s$ for adjacent regions is enough to recover the coloring. In fact, as we will see in the next section, to recover the coloring (up to translation by constant colorings), it is enough to only know the differences $d(R_i.R_j)'s$ among the crossing-adjacent shaded regions; and the nullspace of the (pre-)Goeritz matrix stores the information of the differences in a compact way.\\

\section{Bijections between solutions of coloring and Goeritz matrices}
In this section we demonstrate a bijection between solutions of the coloring matrix and Goeritz matrix of a diagram $D$. Suppose the shaded regions of $D$ are enumerated $R_1,...,R_s$.
Suppose we have an element $\vec v$ in the nullspace of the pre-Goertiz matrix $\tilde{G}$, i.e. $\tilde{G}\vec{v}=\vec{0}$. Let us define $d(R_i,R_j):=v_j-v_i$, and we see that $$d(R_i,R_j)+d(R_j,R_k)=v_j-v_i+v_k-v_j=v_k-v_i=d(R_i,R_k).$$

If these numbers $d(R_i,R_j)$ are differences coming from an actual coloring (this is true as we shall see) of the knot diagram, if we know a color on one of the strands, we can use these differences
to figure out the colors on all the other strands of the knot diagram. We will use this observation and assign a color to one strand, and given the numbers $d(R_i,R_j)$, we will assign colors
on all the other strands, and verify that this actually gives rise to a valid coloring. Just like in our discussion of the difference, we will need to make some choices and we need to check
that they are all give consistent coloring on a strand.

To this end, let us define an auxiliary knot diagram to be a modification of a knot diagram where we break up the overstrands at each crossing, and we
will draw a small rectangle at each crossing to keep track of which strand was the overcrossing, see Figure \ref{auxkd}.
\begin{figure}[!ht]
    \centering
    \includegraphics[width=9 cm]{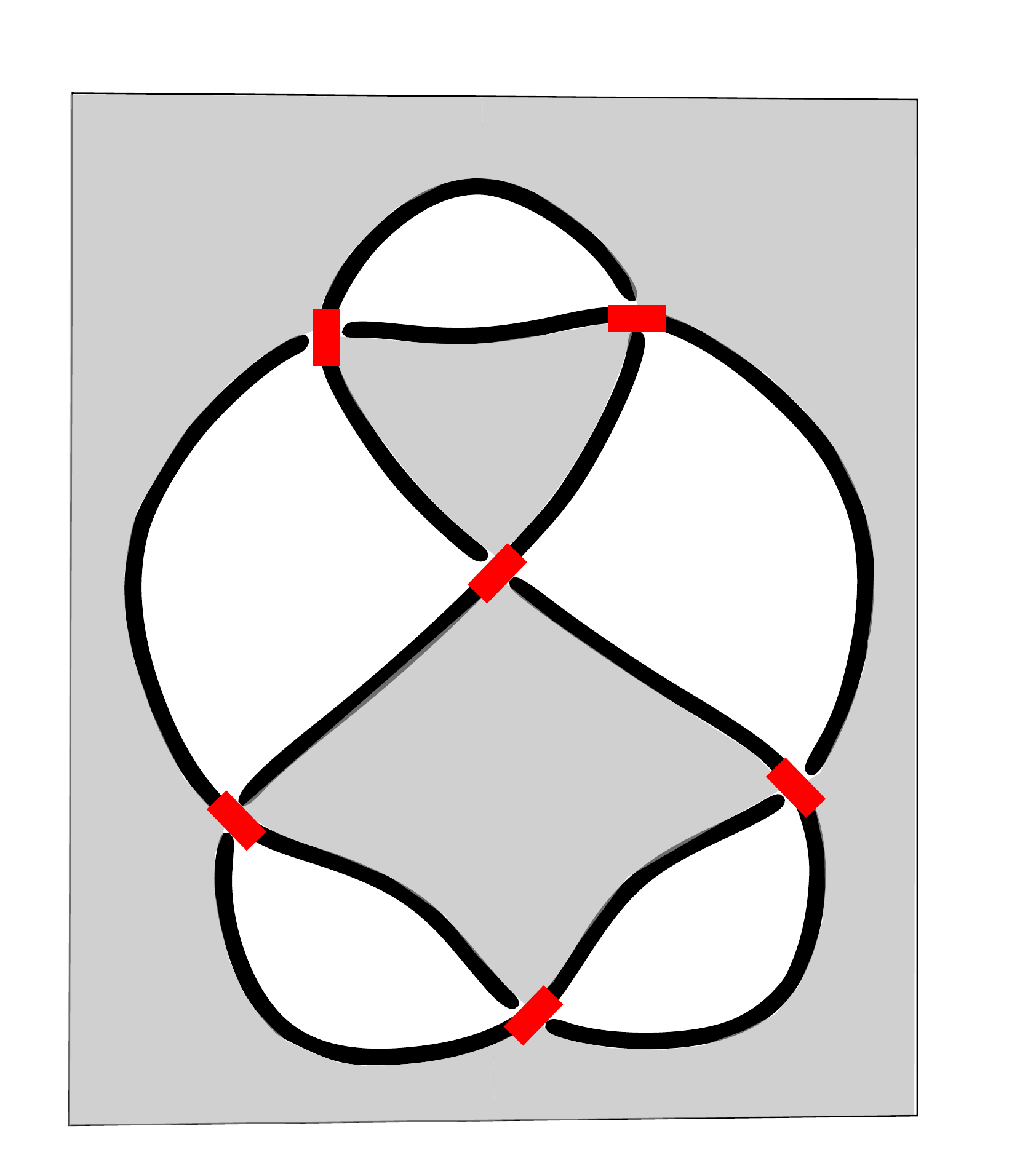}
    \caption{Auxiliary knot diagram for the knot diagram in Figure \ref{GM}.}
    \label{auxkd}
\end{figure}

We will define an \textbf{auxiliary $n$-coloring} on a knot diagram to be an assignment of colors to each of strands
of the auxiliary knot diagram, where at each crossing if the overstrands are labeled $w$ and $y$;
and the understrands are labelled $x$ and $z$, then $w+y=x+z$, i.e. the overstrands need not have the same labelling, see Figure \ref{auxkcolor}.
\begin{figure}[!ht]
    \centering
    \includegraphics[width=6 cm]{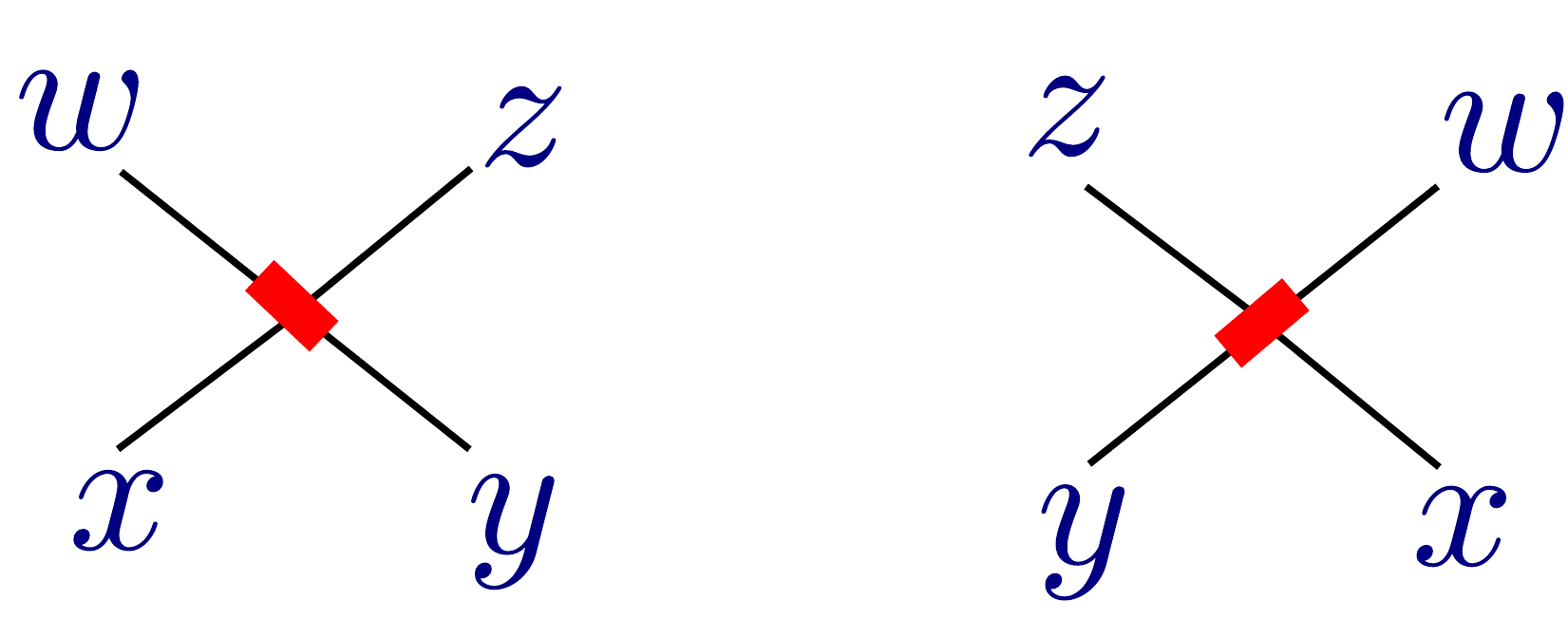}
    \caption{Auxiliary coloring equation: $w+y=x+z$.}
    \label{auxkcolor}
\end{figure}
If it turns out that at each crossing both the segments of 
the overstrands have the same labelling, then this auxiliary coloring gives us an actual coloring of the knot diagram. Note that we can define the difference for an auxiliary knot coloring exactly the same way we did for a knot coloring, and the second proof of Claim \ref{wd} carries over to show that this 
difference is well defined.

Let us pick a strand $\alpha_1$ in the auxiliary knot diagram and color it $x_1$. It will be part of the boundary of exactly one shaded region, let us call it $R_1$. As we go around
the boundary of $R_1$ in the counterclockwise direction, if we come across a crossing $c$ with the other shaded region being $R_j$; we will add $\eta(c)d(R_1,R_j)$ to the coloring of the subsequent strand, see Figure \ref{auxcolor}.
\begin{figure}[!ht]
    \centering
    \includegraphics[width=10 cm]{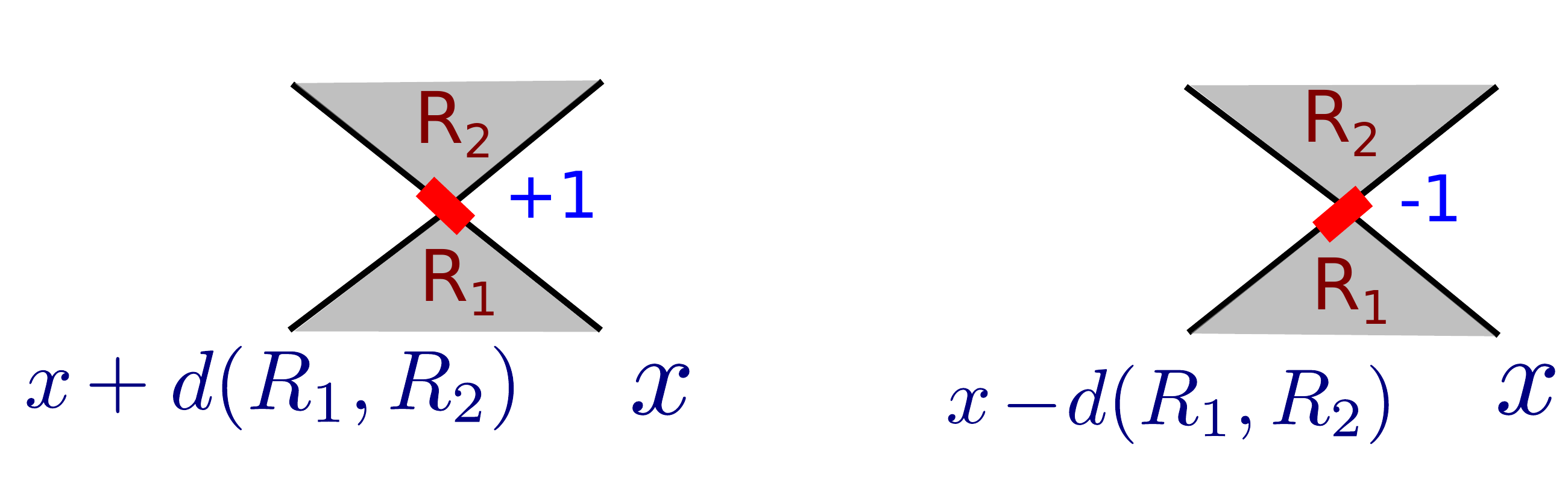}
    \caption{Assigning colors to each strand bounding a region in the auxiliary knot diagram.}
    \label{auxcolor}
\end{figure}
We need to make sure that when we go all the way across and come back to the strand $\alpha_1$ we still get the color $x_1$. Note that the color we get by going all the way across is
$$x_1+\sum_{c\text{ is a crossing of }R_1} \eta(c)d(R_1,R_{j(c)}),$$ where $R_{j(c)}$ denotes the other shaded region which has crossing $c$.
We observe that 
$$\sum_{c\text{ crossing of }R_1} \eta(c)d(R_1,R_{j(c)})=\sum_{c\text{ crossing of }R_1} \eta(c)(v_1-v_{j(c)}) =\sum_{j=2}^k\tilde{G}_{1,j}(v_j-v_1)$$
$$=(\sum_{j=2}^k-\tilde{G}_{1,j})v_1 + \sum_{j=2}^k\tilde{G}_{1,j}v_j=\tilde{G}_{1,1}v_1 + \sum_{j=2}^k\tilde{G}_{1,j}v_j=\sum_{j=1}^k\tilde{G}_{1,j}v_j=\tilde{G}\vec v=\vec 0$$
since $\vec v$ is in the null space of the pre-Goeritz matrix $\tilde{G}$.

We can do the exact same thing for each of the regions by picking one strand and arbitrarily assigning a color, and coloring all the other strands by adding $\eta(c)d(R_i,R_j)$ to the coloring
as we pass across a crossing $c$ with shaded regions $R_i$ and $R_j$, and we obtain an auxiliary coloring of the knot diagram.

Let us now choose a collection of $s-1$ crossings joining the $s$ shaded regions $R_1,...,R_s$, i.e. given any $i$ and $j$, we can draw an arc from a point in the interior of region $R_i$ to the
a point in the interior of region $R_j$ so that the arc passes between regions only in arbitrarily small neighborhoods of the chosen crossings. Starting with a color $x_1$ on a strand $\alpha_1$
in the region $R_1$, we obtain an auxiliary $n$-coloring on the entire knot diagram by choosing to color the overstrands at the chosen crossings by the same element of $\mathbb{Z}/n\mathbb{Z}$, 
and extending this to each of the strands partially bounding a region by the procedure explained in the preceding two paragraphs. By construction, the colorings on each of the overstrands agree
on the chosen $s-1$ crossings, and we show below they agree on every other crossing as well, thereby giving us an actual knot coloring. 

Let us denote by $\delta(R_p,R_q)$ the difference between the regions $R_p$ and $R_q$ of this auxiliary $n$-coloring.
Suppose $c$ is any crossing in the knot diagram, with the shaded regions $R_i$ and $R_j$ incident on it. We have an arc $\gamma$ from a point in the interior of region $R_i$ to the
a point in the interior of region $R_j$,  traversing the regions $R_{l_1}=R_i,R_{l_2},...,R_{l_r}=R_j$, so that it goes between regions $R_{l_k}$ and $R_{l_{k+1}}$ in an arbitrarily small
neighborhoods of the chosen crossings. Then we have $$\delta(R_i,R_j)=\sum_{k=1}^{r-1} \delta(R_{l_k},R_{l_{k+1}}) \quad\text{  (by choosing the arc $\gamma$ to compute the difference)} $$
$$ =\sum_{k=1}^{r-1} d(R_{l_k},R_{l_{k+1}}) \quad\text{    (by construction of the auxiliary coloring)} $$
$$=d(R_i,R_j) \quad\quad\text{ (telescoping sum since $d(R_p,R_q)=v_q-v_p$).}$$
It follows that at the crossing $c$ the colorings on the overstrands agree, and since $c$ is arbitrary, this is true at any crossing. Thus, we have obtained a valid knot coloring with its 
difference for shaded regions $\delta(R_i,R_j)$ agreeing with $d(R_i,R_j)$ we defined earlier. We should note that this implies we would obtain the same knot coloring independent of the choice of
which of the $s-1$ crossings we chose earlier. Also, if instead of coloring the strand $\alpha_1$ with $x_1$, we colored $\alpha_1$ with the color $x_1+a$, then the above procedure would give a new coloring where every strand would get the color $a$ plus
the original coloring.

Conversely, we note that if we start with a valid coloring, we can look at the differences $d(R_i,R_j)$ among the shaded regions, and they would satisfy the Goeritz relations $$\sum_{c\text{ crossing
of }R_i} \eta(c)d(R_i,R_{j(c)})=0$$ for each region $R_i$ , which in turn would correspond to a solution of the pre-Goeritz matrix, once we choose a value for a region, say $R_1$.

 Hence, under this procedure semitrivial (respectively non-semitrivial) solutions of the pre-Goeritz matrix correspond to the semitrivial (respectively non-semitrivial) solutions of the pre-coloring matrix. 
 More concretely, we have:

   \begin{thm}\label{thmA}

 Suppose we have a knot diagram $D$ for a knot $K$ with $c$ crossings and $s$ shaded regions, let us choose a strand $\alpha_1$ of the knot diagram partially bounding a shaded region $R_1$. Let's suppose we delete the first column and first row of 
 the pre-coloring matrix $\tilde{C}$ (respectively pre-Goeritz matrix $\tilde{G}$)
 to obtain coloring matrix $C$ (respectively Goeritz matrix $G$).
 Then for any $n\in \mathbb{N}$ there is a bijection among the collection of: 
 \begin{enumerate}[label=(\roman*)]
     \item Non-trivial solutions $\tilde{v}\in (\mathbb{Z}/n\mathbb{Z})^c$ of $\tilde{C}$ with first entry 0.
     \item Non-trivial solutions $v\in (\mathbb{Z}/n\mathbb{Z})^{c-1}$ of $C$.
     \item Non-trivial solutions $w\in (\mathbb{Z}/n\mathbb{Z})^{s-1}$ of $G$.
     \item Non-trivial solutions $\tilde{w}\in (\mathbb{Z}/n\mathbb{Z})^s$ of $\tilde{G}$ with first entry 0.
 \end{enumerate}
     Moreover the above bijection between Nul($C$) and Nul($G$) is linear, and so we get an isomorphism of these null spaces of $\mathbb{Z}/n\mathbb{Z}$-modules (for $n$ prime, this means a vector space isomorphism). In particular, for prime $n$, the mod $n$ nullity (i.e. dimension of the null space) of both these matrices are equal.
  \end{thm}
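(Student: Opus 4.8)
The plan is to assemble the theorem from four bijections, three of which have essentially already been established in the surrounding text, and then to upgrade the middle bijection to a module isomorphism and read off the equality of nullities. First I would dispose of the bijection between (i) and (ii): this is exactly the content of Proposition~\ref{prop2}'s proof specialized to deleting the first row and column, where adding a semitrivial (constant) vector normalizes the first coordinate to $0$; the map ``delete the first entry'' and its inverse ``insert a $0$ in the first slot'' give mutually inverse bijections, and they are plainly $\mathbb{Z}/n\mathbb{Z}$-linear. Symmetrically, the bijection between (iii) and (iv) is the same argument applied to the pre-Goeritz matrix $\tilde G$, using that $\tilde G$ has constant vectors in its kernel (its rows sum to zero); again insertion/deletion of a first coordinate equal to $0$ is linear. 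So the only substantive content is the correspondence between (ii) and (iii), i.e.\ between $\mathrm{Nul}(C)$ and $\mathrm{Nul}(G)$.

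For that middle correspondence I would invoke the construction carried out in Section~8: given $\vec v \in \mathrm{Nul}(\tilde G)$ with first entry $0$, one sets $d(R_i,R_j) := v_j - v_i$, chooses the color $x_1$ on the distinguished strand $\alpha_1$, and extends to an auxiliary coloring using the $\eta(c)\,d(R_i,R_j)$ rule; the computation in the text (the telescoping/Goeritz identity) shows this is consistent around every region and that the auxiliary coloring is in fact an honest knot coloring, with its shaded-region differences $\delta(R_i,R_j)$ agreeing with $d(R_i,R_j)$. Normalizing $x_1$ to make the first entry of the resulting coloring vector zero (equivalently, reading off the color on $\alpha_1$), we land in the solutions of $\tilde C$ with first entry $0$, hence after deleting that entry in $\mathrm{Nul}(C)$. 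The text's ``conversely'' paragraph gives the reverse map: from a coloring, take differences among shaded regions, which satisfy the Goeritz relations, hence yield an element of $\mathrm{Nul}(\tilde G)$ once a value is fixed on $R_1$; fixing it to be $0$ gives the inverse. That the two maps are mutually inverse is exactly the assertion $\delta = d$ proved in Section~8, together with the observation there that the construction is independent of the chosen spanning set of $s-1$ crossings.

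The part that needs a genuine (if short) argument beyond quoting Section~8 is \emph{linearity} of the $\mathrm{Nul}(G)\leftrightarrow\mathrm{Nul}(C)$ bijection, and this is where I expect the only real friction. I would argue it as follows: fix the normalization $v_1 = 0$ and $x_1 = 0$ throughout. Then the color assigned by the construction to any strand $\beta$ is a fixed $\mathbb{Z}/n\mathbb{Z}$-linear combination of the entries of $\vec v$, because it is obtained by starting from $0$ and successively adding terms $\pm d(R_i,R_j) = \pm(v_j - v_i)$ along a path of crossings from $\alpha_1$ to $\beta$; the well-definedness established in Section~8 guarantees this linear functional does not depend on the path, so $\vec v \mapsto (\text{coloring})$ is linear, and deleting the (zero) first coordinate is linear. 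Scaling $\vec v$ by $t$ scales every $d(R_i,R_j)$ by $t$ hence the whole coloring by $t$; adding $\vec v + \vec v'$ adds the colorings. Linearity of the inverse is automatic once the forward map is a linear bijection. Finally, with $\mathrm{Nul}(C) \cong \mathrm{Nul}(G)$ as $\mathbb{Z}/n\mathbb{Z}$-modules, for $n$ prime these are vector spaces over the field $\mathbb{Z}/n\mathbb{Z}$ and isomorphic vector spaces have equal dimension, giving the equality of mod-$n$ nullities; I would also remark that for general $n$ one still gets equality of the full module structure, and in particular the determinants of $C$ and $G$ agree up to sign by Proposition~\ref{prop2}, Proposition~\ref{prop3} and the fact that two finitely generated abelian groups presented by square matrices over $\mathbb{Z}$ with isomorphic reductions mod every $n$ have equal determinants in absolute value.
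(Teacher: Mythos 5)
Your proposal is correct and follows essentially the same route as the paper: the (i)$\leftrightarrow$(ii) and (iii)$\leftrightarrow$(iv) bijections via inserting/deleting a zero first coordinate as in Proposition~\ref{prop2}, and the middle correspondence via the difference construction and auxiliary colorings developed in the section preceding the theorem. You also correctly supply the path-independence/linearity argument that the paper defers to Exercise~\ref{exA}, so nothing essential is missing.
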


  \begin{ex}\label{exA} Complete the proof of the above theorem by checking linearity of the bijection between the solution spaces.
\end{ex}
  
For a prime $n$, the mod $n$ nullity of either the coloring or Goeritz matrix completely determines how many $n$-colorings there are of a knot $K$, and is an invariant of the knot, which we will refer to as the $n$-\textbf{nullity} 
of $K$. We remark that if $n$ is composite, then knowing the cardinality of Nul($C$) is not enough to understand the structure of Nul($C$) as a $\mathbb{Z}/n\mathbb{Z}$-module. But when $n$ is prime, Nul($C$) is a vector space, and we completely understand it once we know the dimension.

We note a few consequences of Theorem \ref{thmA} (combined with earlier propositions):
\begin{enumerate}
    \item  $n$ divides $\det C \Longleftrightarrow$ ${C}$ has a non-trivial solution in $\mathbb{Z}/n\mathbb{Z}\Longleftrightarrow$ $K$ has a non-trivial $n$-coloring ${G}\Longleftrightarrow$ ${G}$ has a non-trivial solution in $\mathbb{Z}/n\mathbb{Z}\Longleftrightarrow$ $n$ divides $\det G$.
    \item The Goeritz determinant is an invariant of the knot, and moreover equals the coloring determinant.
     \item For any $n$, the null-space of a Goeritz matrix as a $\mathbb{Z}/n\mathbb{Z}$-module is a knot invariant, which is isomorphic to the null space of any coloring matrix of $K$.

\end{enumerate}
Thus, we can find the  determinant of knot; the space of $n$-colorings determinant from a Goeritz matrix, which is frequently smaller than a coloring matrix.

\section{Colorings of Pretzel knots}
In this final section, we will use ideas from the last few sections and find determinant and $n$-nullity of pretzel knots.
They have been previously computed pretzel knots with up to four twist regions \cite{P1,P2} using coloring matrices.\\

Recall that a pretzel knot $P(q_1, q_2,...,q_m)$ has $m$ twist regions joined up as illustrated in Figure \ref{pret}, where there are $q_i$ (which can be both positive and negative) half-twists in the $i$-th region. See Figure \ref{p33m3} for the diagram of the pretzel knot $P(3,3,-3)$.

\begin{figure}[!ht]
    \centering
    \includegraphics[width=8 cm]{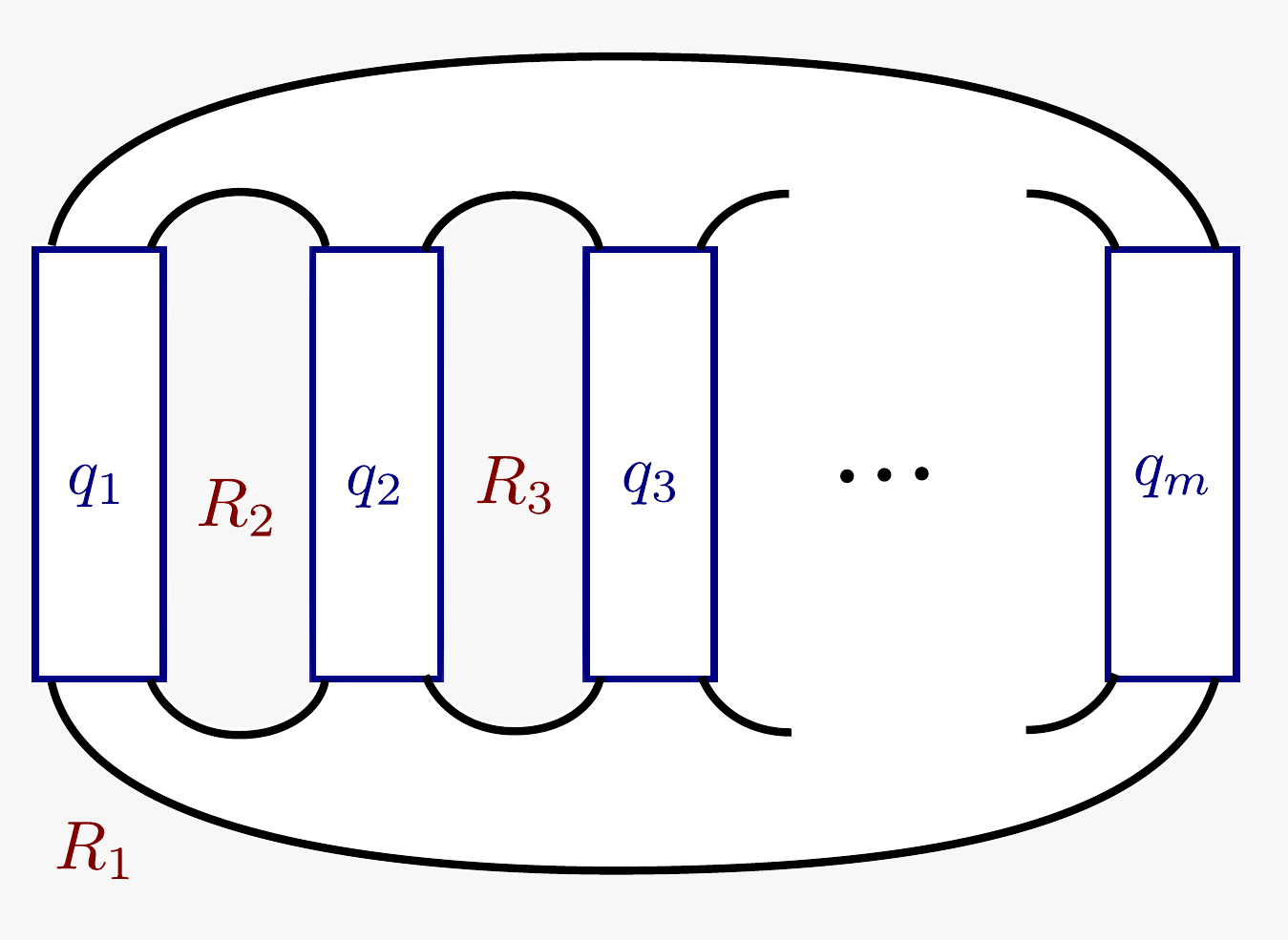}
    \caption{Pretzel Knot $P(q_1,...,q_m)$.}
    \label{pret}
\end{figure}

Note that for the pretzel knot $P(q_1, q_2,...,q_m)$ the coloring matrix from the above diagram will be of the order $Q\times Q$, where $Q=|q_1|+...+|q_m|$, where as a Goeritz matrix will be a $(m-1)\times (m-1)$ matrix.
It turns out that it seems it is easier to compute the determinant and nullity of pretzel knots from another linear system, constructed with the differences, as indicated below. An interested reader may work out determinant and $n$-nullity from a Goeritz matrix (and even from a coloring matrix, if feeling particularly adventurous).

Let $d_i$ denote the difference $d(R_i,R_{i+1})$ between shaded regions $R_i$ and $R_{i+1}$. Note that for each twist region if there are $q$ half-twists, and the difference is $d$, then we can figure out the colors on all the strands if we know a color on the leftmost strand.  We observe that the difference between the colors of the top left (respectively right) and the bottom left (respectively right) strand in the $i$-th twist region is $q_id_i$. Also, note that the top (respectively bottom) right strand of the $i$-th twist region is exactly the same as the top (respectively bottom) leftt strand of the $(i+1)$-th twist region. Thus, for adjacent twist regions we must have the increase in the vertical direction must be the same, so we have $q_id_i=q_{i+1}d_{i+1}$ for all $i$. Moreover if we drew an arc horizontally traversing just below (see figure) each of the twist region regions we see that $-(d_1+d_2+...+d_m)=0$ , since the leftmost and rightmost strands are the same and must have the same color. We illustrate the above discussion with the explicit example of of the pretzel knot $P(3,3,-3)$ in Figure \ref{p33m3}. 

\begin{figure}[!ht]
    \centering
    \includegraphics[width=9 cm]{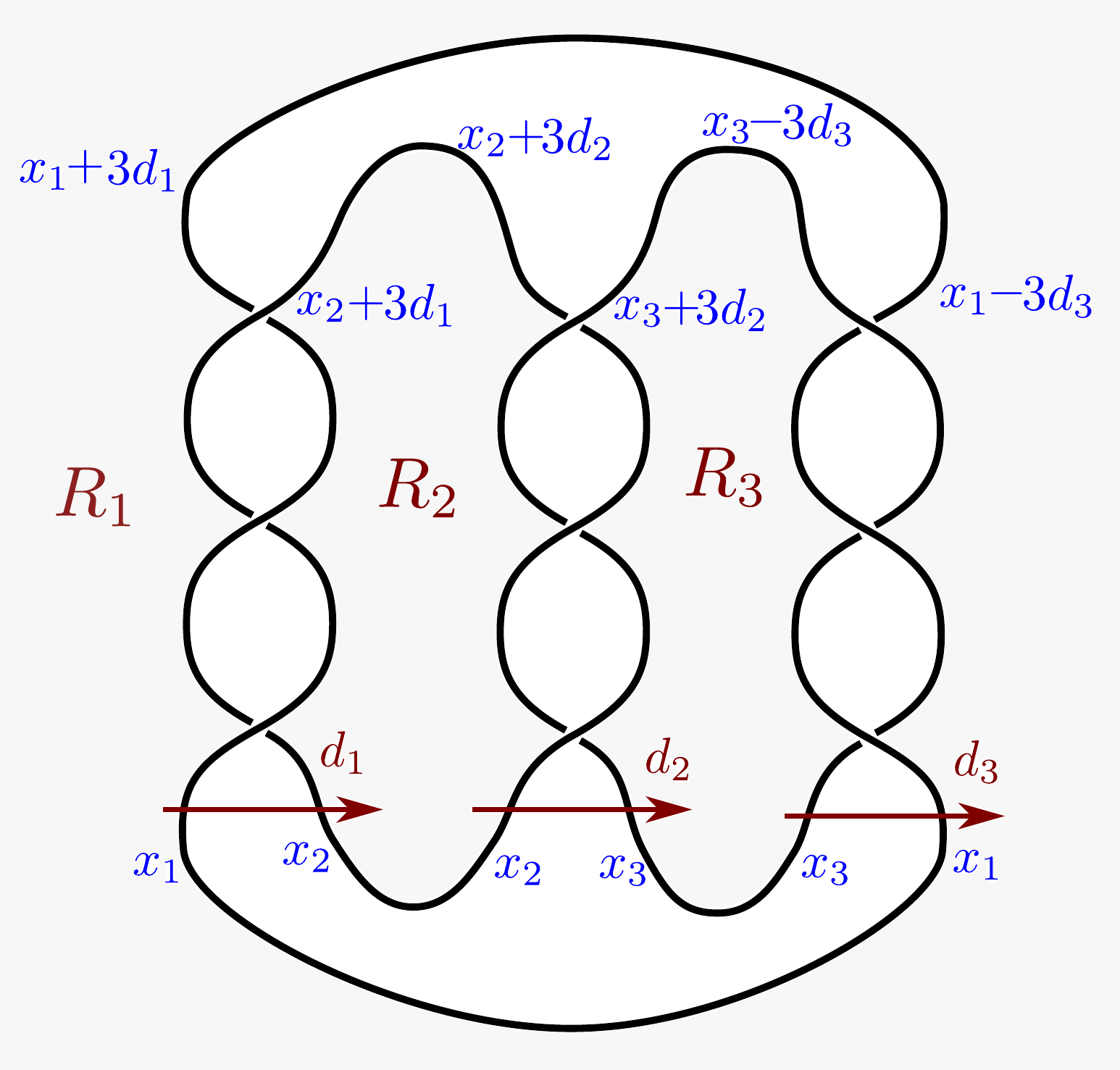}
    \caption{Pretzel Knot $P(3,3,-3)$.}
    \label{p33m3}
\end{figure}

Thus we obtain the following collection of linear equations in $\mathbb{Z}/n\mathbb{Z}$:
$$d_1+d_2+...+d_m=0$$
$$-q_1d_1+q_2d_2=0$$
$$...$$
$$-q_1d_1+q_md_m=0$$

Let us write out the above system in matrix form (with $d_i$'s being the variables) $A\vec d=\vec 0$, where the matrix $A$ is:

$$\spalignmat{1 1 1 1 . . . 1 ; -q_1 q_2 0 0 . . . 0; -q_1 0 q_3 0  . . . 0; 
. . . . . , , ,  .; . . . . , , .  , , .;. . . . , , , . , .;
-q_1 0 0 0 . . . q_m}$$

\begin{ex}\label{expr} For a pretzel knot diagram of $P(q_1, q_2,...,q_m)$, pick a strand and name it $\alpha_1$. Given a solution $\vec d$ to the linear system described above, and any labelling of the strand $\alpha_1$ by some $x_1 \in (\mathbb{Z}/n\mathbb{Z})$, show that there is a unique extension to coloring on the entire knot diagram by using the solution $\vec d$. Moreover show that this defines a linear bijective correspondence between the nullspace of the matrix $A$ reduced modulo $n$, and the space of $n$-colorings of the knot diagram (i.e. nullspace of any coloring matrix).
\end{ex}

It follows that we can use the matrix $A$ to compute the determinant and $n$-nullity of $P(q_1, q_2,...,q_m)$, and this method seems to be the easiest way of computing them.

\begin{claim}\label{det}
The determinant of the $m\times m$ matrix $A$ is given by $$q_1q_2...q_m (\frac{1}{q_1}+\frac{1}{q_2}+...+\frac{1}{q_m})$$
\end{claim}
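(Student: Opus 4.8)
The plan is to compute $\det A$ directly by cofactor (Laplace) expansion along the first row, exploiting the extreme sparsity of rows $2,\dots,m$. Since every entry of the first row equals $1$, we have $\det A=\sum_{i=1}^{m}(-1)^{1+i}M_{1,i}$, where $M_{1,i}$ is the minor obtained by deleting the first row and the $i$-th column. The whole computation then reduces to evaluating these $m$ minors, each of which turns out to be, up to sign, the product of the $q_k$ with one factor omitted.

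First I would handle $M_{1,1}$: deleting the first row and first column leaves the diagonal matrix $\mathrm{diag}(q_2,\dots,q_m)$, so $M_{1,1}=\prod_{k=2}^{m}q_k=\prod_{k\neq 1}q_k$, contributing $+\prod_{k\neq 1}q_k$ to $\det A$. For $i\geq 2$, after deleting the first row and the $i$-th column, the surviving copy of the row $(-q_1,0,\dots,q_i,\dots,0)$ has lost its entry $q_i$, so it now has a single nonzero entry $-q_1$ sitting in the first column; inside the $(m-1)\times(m-1)$ minor this row occupies position $i-1$ (among the original rows $2,\dots,m$) and the first column occupies position $1$. Expanding $M_{1,i}$ once more along this row introduces an internal sign $(-1)^{(i-1)+1}=(-1)^{i}$ and leaves the diagonal matrix $\mathrm{diag}\bigl(q_k:k\in\{2,\dots,m\}\setminus\{i\}\bigr)$, whence $M_{1,i}=(-1)^{i}(-q_1)\prod_{k\in\{2,\dots,m\}\setminus\{i\}}q_k=(-1)^{i+1}\prod_{k\neq i}q_k$. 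Combining with the outer sign, $(-1)^{1+i}M_{1,i}=(-1)^{2i+2}\prod_{k\neq i}q_k=\prod_{k\neq i}q_k$ for every $i\geq 2$ as well. Summing over all $i$ then gives $\det A=\sum_{i=1}^{m}\prod_{k\neq i}q_k=q_1q_2\cdots q_m\bigl(\tfrac{1}{q_1}+\tfrac{1}{q_2}+\cdots+\tfrac{1}{q_m}\bigr)$.

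As an alternative that is perhaps more transparent when all $q_i\neq 0$, one can scale row $k$ by $1/q_k$ for $k=2,\dots,m$ (pulling out a factor $q_2\cdots q_m$) to get an arrowhead matrix with $1$'s on the diagonal and entries $-q_1/q_k$ down the first column, then subtract rows $2,\dots,m$ from the first row; the $(1,1)$ entry becomes $1+q_1\sum_{k=2}^{m}1/q_k=q_1\sum_{k=1}^{m}1/q_k$ and the matrix is upper triangular, yielding the same value. Since the claimed identity is a polynomial identity in the $q_i$, the case $q_i\neq 0$ already forces it in general; nevertheless I would present the cofactor argument as the main proof because it requires no case division. The only genuine obstacle is the sign bookkeeping in the iterated expansion for $i\geq 2$ — correctly locating the row $(-q_1,0,\dots,0)$ inside $M_{1,i}$ and checking that the outer sign $(-1)^{1+i}$ and the internal sign $(-1)^{i}$ conspire to give a uniform $+\prod_{k\neq i}q_k$; everything else is a routine evaluation of diagonal determinants.
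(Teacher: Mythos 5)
Your computation is correct, and it takes a genuinely different route from the paper. The paper proves the claim by induction on $m$, expanding along the \emph{last} row and recognizing the upper-left $(m-1)\times(m-1)$ block as the matrix for $P(q_1,\dots,q_{m-1})$; you instead expand along the first row of $1$'s and evaluate each minor $M_{1,i}$ in closed form by a second expansion, obtaining the symmetric expression $\det A=\sum_{i=1}^{m}\prod_{k\neq i}q_k$ directly with no induction. Your sign bookkeeping checks out: for $i\geq 2$ the row of $A$ that loses its diagonal entry $q_i$ sits in position $i-1$ of the minor with its lone entry $-q_1$ in column $1$, so the internal sign $(-1)^{(i-1)+1}=(-1)^{i}$ combines with the outer $(-1)^{1+i}$ to give $+\prod_{k\neq i}q_k$ uniformly. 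What your approach buys is a one-pass derivation of the determinant as the elementary symmetric-type sum $\sum_i\prod_{k\neq i}q_k$, which also makes transparent that the stated formula is really a polynomial identity in the $q_i$ (so no division by $q_i$ is ever genuinely needed); what the paper's induction buys is a recursion that mirrors the geometric operation of adding one twist region at a time, which fits the expository build-up of the section. Your alternative row-scaling argument is also valid when all $q_i\neq 0$, and your remark that the polynomial identity then holds in general is a clean way to dispose of that restriction.
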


\begin{proof}

We will prove the claim by induction on $m$.
\begin{ex}
Check the base cases for $m=2,3.$
\end{ex}

In order to compute the determinant of $A$, we use Laplace expansion along the last row.
$$ \det A=(-1)^{m+1} q_1 \det \spalignmat{ 1 1 1 . . . 1 ;  q_2 0 0 . . . 0;  0 q_3 0  . . . 0; 
 . . . . , , ,  .;  . . . , , .  , , .; . . . , , , . , .;
 0 0 0 . . q_{m-1} 0}+  q_m \det \spalignmat{1 1 1  . . . 1 ; -q_1 q_2 0  . . . 0; -q_1 0 q_3   . . . 0; 
. . . . . , ,  .; . . . . , , .   , .;. . . . , , . .  ;
-q_1 0 0 .  . . q_{m-1}}$$

We observe that by expanding along the last column, we can compute the determinant of the first matrix easily since we get a diagonal matrix:
$$\det \spalignmat{ 1 1 1 . . . 1 ;  q_2 0 0 . . . 0;  0 q_3 0  . . . 0; 
 . . . . , , ,  .;  . . . , , .  , , .; . . . , , , . , .;
 0 0 0 . . q_{m-1} 0}= (-1)^{m-1}\det \spalignmat{ q_2 0  . . . 0;  0 q_3   . . . 0; 
 . .  . , , ,  .;  . .  , , .  , , .; . .  , , , . , .;
 0 0  . . . q_{m-1}}= (-1)^{m-1} q_2...q_{m-1}  $$

Inductively, we know what the second determinant in the above expansion is. 
$$\det \spalignmat{1 1 1  . . . 1 ; -q_1 q_2 0  . . . 0; -q_1 0 q_3   . . . 0; 
. . . . . , ,  .; . . . . , , .   , .;. . . . , , , .  ;
-q_1 0 0 .  . . q_{m-1}}= q_1q_2..q_{m-1} (\frac{1}{q_1}+\frac{1}{q_2}+...+\frac{1}{q_{m-1}})$$

Combining them we obtain
$$\det A= (-1)^{m+1} q_1 (-1)^{m-1}q_2...q_{m-1} + q_mq_1q_2...q_{m-1} (\frac{1}{q_1}+\frac{1}{q_2}+...+\frac{1}{q_{m-1}})$$
$$=q_1q_2...q_{m-1}+ q_1q_2...q_m(\frac{1}{q_1}+\frac{1}{q_2}+...+\frac{1}{q_{m-1}})=q_1q_2...q_m (\frac{1}{q_1}+\frac{1}{q_2}+...+\frac{1}{q_m})$$

\end{proof}

Let us now assume $n$ is a prime and compute mod $n$ nullity of $A$. 
We make a few observations:
\begin{claim} \label{colpr}
If all the $q_i$'s are coprime to $n$ then the mod $n$ nullity of $A$ is either 1 or 0, depending on whether n divides $\det A= q_1q_2...q_m(\frac{1}{q_1}+\frac{1}{q_2}+...+\frac{1}{q_m})$ or not.

\end{claim}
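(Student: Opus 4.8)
The plan is to show that when every $q_i$ is a unit modulo $n$, the matrix $A$ (reduced mod $n$) has rank at least $m-1$, so its nullity is at most $1$; combined with the rank–nullity theorem and the standard fact that the nullity is positive if and only if $\det A \equiv 0 \pmod n$ (Proposition \ref{prop2}, transported through the bijection in Exercise \ref{expr}), this gives the dichotomy. First I would isolate the obvious $(m-1)\times(m-1)$ submatrix of $A$ that does not involve the first row: it is exactly the matrix $\operatorname{diag}(q_2, q_3, \ldots, q_m)$ supported on rows $2,\ldots,m$ and columns $2,\ldots,m$. Its determinant is $q_2 q_3 \cdots q_m$, which is a unit modulo $n$ since $n$ is prime and coprime to each $q_i$. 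Hence this submatrix is invertible over $\mathbb{Z}/n\mathbb{Z}$, so $\operatorname{rank}_{\mathbb{Z}/n\mathbb{Z}}(A) \ge m-1$, and therefore the mod $n$ nullity of $A$ is $0$ or $1$.

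Next I would pin down which of the two cases occurs. Since $\mathbb{Z}/n\mathbb{Z}$ is a field (here $n$ is prime), $A$ has nullity $0$ exactly when $A$ is invertible mod $n$, i.e. exactly when $\det A \not\equiv 0 \pmod n$; and nullity $1$ in the complementary case $\det A \equiv 0 \pmod n$. By Claim \ref{det} we have $\det A = q_1 q_2 \cdots q_m\bigl(\tfrac{1}{q_1} + \cdots + \tfrac{1}{q_m}\bigr)$, which is the quantity named in the statement, so this is precisely the stated criterion. (Note that because each $q_i$ is invertible mod $n$, the expression $q_1\cdots q_m(\sum 1/q_i)$ is unambiguous mod $n$ even though it is written with fractions; it equals $\sum_{i} \prod_{j \ne i} q_j$, an honest integer.)

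I do not expect a genuine obstacle here — the argument is essentially the observation that a large triangular (here diagonal) block of $A$ is already nonsingular mod $n$. The only point requiring a little care is bookkeeping: making sure the chosen $(m-1)\times(m-1)$ minor really is diagonal (the lone off-diagonal entries $-q_1$ all live in the first column, which we have deleted, and the entries coupling different twist-region equations are $0$), and making sure the passage from "coloring matrix" statements to "the matrix $A$" statements is legitimate — but that is exactly the content of Exercise \ref{expr}, which furnishes a linear isomorphism between $\operatorname{Nul}(A)$ and the space of $n$-colorings, hence between $\operatorname{Nul}(A)$ and $\operatorname{Nul}(C)$ for a coloring matrix $C$. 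Under that isomorphism "nullity of $A$" and "divisibility of $\det A$ by $n$" match up with the corresponding statements for $C$, so nothing new needs to be proved about $\det A$ versus $\det C$.
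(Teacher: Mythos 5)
Your proposal is correct and follows essentially the same route as the paper: both arguments delete the first row and first column of $A$ to expose the invertible diagonal block $\operatorname{diag}(q_2,\ldots,q_m)$, conclude via rank--nullity that the mod $n$ nullity is $0$ or $1$, and then decide between the two cases by whether $\det A$ vanishes mod $n$. Your added remarks (that the fractional expression for $\det A$ equals the integer $\sum_i \prod_{j\neq i} q_j$, and that Exercise \ref{expr} justifies transporting statements between $A$ and a coloring matrix) are correct clarifications but not a different method.
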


\begin{proof}
Note that the submatrix $A_{1,1}$ obtained by deleting the first row and first column is a full rank diagonal matrix and so has mod $n$ rank $m-1$, since we are assuming $n$ does not divide any of $q_2,...,q_n$. It follows that the mod $n$ rank of $A$ is either $m-1$ or $m$ (which by the rank nullity theorem is equivalent to saying the mod $n$ nullity is 1 or 0). The proof is completed by using the Invertible Matrix Theorem, that mod $n$ nullity of $A$ is 0 iff $\det A=0$ modulo $n$. 
\end{proof}

\begin{claim}\label{nullpr}
If some of the $q_i$'s are divisible by $n$, then the mod $n$ nullity of $A$ is the total number of $q_i$'s divisible by $n$, minus $1$.
\end{claim}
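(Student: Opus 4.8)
The plan is to analyze the null space of $A$ modulo $n$ directly, exploiting the very sparse shape of $A$ rather than going through a rank/submatrix argument. Working over the field $\mathbb{Z}/n\mathbb{Z}$ (here is where primality of $n$ is used), a vector $\vec d = (d_1,\dots,d_m)$ lies in the null space exactly when the first row gives $d_1+\dots+d_m=0$ and rows $2,\dots,m$ give $q_id_i = q_1d_1$ for each $i=2,\dots,m$. In particular all of the products $q_id_i$ coincide; set $c := q_1d_1 = q_2d_2 = \dots = q_md_m \in \mathbb{Z}/n\mathbb{Z}$, so that a single scalar $c$ controls the whole system.

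Let $S = \{\, i : n \mid q_i \,\}$ and $k = |S|\ge 1$ by hypothesis. The key step is to observe that $c=0$ is forced: choosing any $i_0 \in S$, the relation $q_{i_0}d_{i_0}=c$ reads $0 = c$ in $\mathbb{Z}/n\mathbb{Z}$. Once $c=0$, for every $i \notin S$ the coefficient $q_i$ is invertible mod $n$, so $q_id_i = 0$ forces $d_i = 0$; while for $i \in S$ the relation $q_id_i = c$ imposes no condition on $d_i$. Hence the null space consists precisely of those $\vec d$ supported on the coordinates in $S$ and satisfying the single leftover equation $\sum_{i\in S} d_i = 0$ (the first-row equation, with all $i\notin S$ terms now vanishing). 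This is one nonzero linear equation in the $k$ free variables $\{d_i : i\in S\}$ over a field, so its solution space has dimension $k-1$, which is the asserted nullity.

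The argument is essentially a direct computation, and the only points requiring care are: that rows $2,\dots,m$ genuinely force all $q_id_i$ to be equal (so that the single scalar $c$ suffices); that primality of $n$ is what lets us both invert the $q_i$ with $i\notin S$ and speak of dimension via rank–nullity; and that the hypothesis $k\ge 1$ is exactly what makes $c=0$ automatic and the leftover equation nonvacuous. I do not expect a genuine obstacle here — if anything, the main thing to get right is recognizing that the conclusion is uniform, independent of whether the singled-out index $1$ lies in $S$ or not, which is reassuring since the nullity is a knot invariant and so cannot depend on that choice. As a sanity check, taking $k=m$ recovers nullity $m-1$, consistent with $A$ reducing mod $n$ to the lone equation $\sum_i d_i = 0$.
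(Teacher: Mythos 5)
Your proof is correct, but it takes a genuinely different route from the paper's. The paper first uses the cyclic symmetry of pretzel knots ($P(q_1,\dots,q_m)$ is isotopic to $P(q_2,\dots,q_m,q_1)$), together with the fact that the mod $n$ nullity is a knot invariant, to reduce to the case $n\mid q_1$; in that case the first column of $A$ vanishes mod $n$ except for its top entry, $A$ becomes upper triangular, and the rank and nullity are read off from the diagonal. You instead parametrize the null space directly: rows $2,\dots,m$ force all the products $q_id_i$ to equal a common scalar $c=q_1d_1$, the existence of some $i_0$ with $n\mid q_{i_0}$ forces $c=0$, primality of $n$ then kills $d_i$ for every $i\notin S$, and what survives is the single nontrivial equation $\sum_{i\in S}d_i=0$ in the $k=|S|$ remaining coordinates, giving dimension $k-1$. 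This is in effect a solution to Exercise \ref{exnull}, where the paper asks for exactly such a direct argument not relying on isotopy invariance. What your approach buys is self-containedness and uniformity (no case split on whether the index $1$ lies in $S$), plus an explicit description of the null space; what the paper's approach buys is brevity, since the triangular form makes the pivot count immediate once the reduction to $n\mid q_1$ is granted. The only points worth double-checking in your write-up are minor and you have them right: the verification that the characterization ($d_i=0$ for $i\notin S$ and $\sum_{i\in S}d_i=0$) is not merely necessary but also sufficient for membership in the null space, and the use of $k\ge 1$ both to force $c=0$ and to ensure the leftover equation is nonvacuous.
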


If some of the $q_i $'s are divisible by $n$, we may assume $n$ divides $q_1$ (note that there is cyclic symmetry for pretzel knots, $P(q_1,q_2,...,q_{m-1},q_m)$ is isotopic to $P(q_2,q_3,...,q_{m},q_1)$). In this case, the claim is equivalent to:\\
If $n$ divides $q_1$,then the mod $n$ nullity of $A$ is the number of $q_i$'s (apart from $q_1$) divisible by $n$.
\begin{proof}
When $A$ is reduced modulo $n$, it becomes a upper triangular matrix, and the rank is the number of pivots (i.e. the number of $q_i$'s not divisible by $n$, plus 1 ), and the nullity is the number of 0's in the main diagonal (the number of "other" $q_i$'s divisible by $n$).
\end{proof}

\begin{ex}\label{exnull}
Give a direct proof Claim \ref{nullpr} (without using that $n$-nullity is invariant of the knot), i.e. show the result is true when $q_1$ is coprime to $n$. (Hint: Use row operations).
\end{ex}

To summarize, if none of the $q_i$'s are divisible by $n$, the pretzel knot can have at most one coloring and the determinant determines whether there is one. In cases that some of the $q_i$'s are divisible by $n$, for any $n$-coloring
\begin{enumerate}
    \item the colors of all the strands in the $i$-th twist region will be the same when the corresponding $q_i$ is coprime to $n$;
    \item for those $q_i$ which are divisible by $n$, the colors of the strands in the $i$-th twist region can be different, and these sort of correspond to the free variables, except one has to remember that the colors of the very last such twist region is determined (by the coloring on the first twist region), and this constraint is where we get the "minus 1" in the formula for $n$-nullity.
\end{enumerate}

\end{document}